\DeclareRobustCommand{\rchi}{{\mathpalette\irchi\relax}}
\newcommand{\irchi}[2]{\raisebox{\depth}{$#1\chi$}}
\theoremstyle{plain}
\newtheorem{theorem} {Theorem}[section]
\newtheorem{lemma}[theorem] {Lemma}
\newtheorem{proposition}[theorem] {Proposition}
\newtheorem{corollary}[theorem] {Corollary}
\newtheorem{property}[theorem] {Property}
\newtheorem{definition}[theorem] {Definition}
\newtheorem{example} [theorem]{Example}
\theoremstyle{remark}
\newtheorem{remark}[theorem] {Remark}
\numberwithin{equation}{section}
\newcommand{\R}{{\mathbb R}}
\newcommand{\N}{{\mathbb N}}
\newcommand{\PPP}{{\mathbb P}}
\newcommand{\PP}{{\mathcal P}}
\newcommand{\LL}{{\mathcal L}}
\newcommand{\CC}{{\mathbb C}}
\newcommand{\al}{{\alpha}}
\newcommand{\la}{{\lambda}}
\newcommand{\sa}{{\sigma}}
\newcommand{\iy}{{\infty}}
\newcommand{\vphi}{{\varphi}}
\newcommand{\vep}{{\varepsilon}}
\newcommand{\g}{{\gamma}}
\newcommand{\de}{{\delta}}
\newcommand{\Om}{{\Omega}}
\newcommand{\z}{{\zeta}}
\newcommand{\be}{{\beta}}
\newcommand{\G}{{\Gamma}}
\newcommand{\bna}{\begin{eqnarray}}
\newcommand{\ena}{\end{eqnarray}}
\newcommand{\ba}{\begin{eqnarray*}}
\newcommand{\ea}{\end{eqnarray*}}
\newcommand{\beq}{\begin{equation}}
\newcommand{\eeq}{\end{equation}}
\begin{document}

\title[On Asymptotic Properties of Certain $B$-splines]
{On Asymptotic Properties of Certain $B$-splines
 in Terms of Theta-like Functions}
\author{Michael I. Ganzburg}
 \address{212 Woodburn Drive, Hampton,
 VA 23664\\USA}
 \email{michael.ganzburg@gmail.com}
 \dedicatory{Dedicated to Professor Yuri Abramovich Brudnyi on
 the occasion of his 90th birthday}
 \keywords{$B$-splines, associated $B$-splines, Mellin transform,
 polynomial interpolation, theta functions}
 \subjclass[2010]{Primary 41A15; Secondary 44A20, 44A35}

 \begin{abstract}
 The asymptotic behavior
of the Mellin transform of the associated $B$-splines $B_N^*(t)
:=t^{-N}B_N(t)$
 with special knots
in terms of theta-like functions is found.
The proof is based on
  polynomial interpolation of power functions
   and properties of certain theta-like functions.
   Pointwise asymptotics of $B_N^*$ and $B_N$
   are discussed as well.
 \end{abstract}
 \maketitle

 \section{Introduction}\label{S1}
\setcounter{equation}{0}

In this paper, we discuss the asymptotic behavior
of $B$-splines with special knots
in terms of theta-like functions.

\subsection{Asymptotics for $B$-splines}\label{SS1.1}
 $B$-splines $B_N(t)=B_N\left(t,\Om\right),\,t\in\R$,
with sets $\Om$ of $N+2$ knots
  were introduced by Curry and Schoenberg in seminal paper
\cite{CS1966} (see Eq. \eqref{E3.1} for the definition of $B_N$). $B$-splines play
an important role in numerical analysis, wavelets, and approximation theory
(see, e.g., \cite{S1971,C1992,B2001,S2007}) and,
in addition, they serve as Peano-like kernels
in integral representations of
interpolation differences \cite[Eq. (1.5)]{CS1966}.

The asymptotic behavior of $B$-spline distributions
 was described
in \cite[Theorem 6]{CS1966}
(see also \cite{CS1947} and \cite[Theorem 10.5.1]{K1968})
in the following form:
\begin{theorem}\label{T1.1}
A monotone  function $F:\R\to [0,\iy)$, with $F(-\iy)=0$ and $F(\iy)=1$,
is a limit of $B$-spline distributions
$F_N(x):=\int_{-\iy}^x B_N\left(t,\Om\right)dt$
as $N\to\iy$ for all x at which $F$ is
continuous if and only if
$F(x)=\int_{-\iy}^x \Lambda(t)dt$, where $\Lambda$
is a P\'{o}lya frequency function
(see \cite[Sect. 7]{K1968} or \cite[Sect. 10.5]{CS1966} for equivalent definitions).
\end{theorem}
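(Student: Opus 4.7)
The plan is to reduce the problem to characteristic functions via L\'{e}vy's continuity theorem and then invoke Schoenberg's structural characterization of P\'{o}lya frequency functions (PFFs). Starting from the divided-difference representation $B_N(t,\Om) = (N+1)\,[t_0, \ldots, t_{N+1}](\cdot - t)_+^N$, a direct calculation yields the bilateral Laplace transform
$$\widehat{B_N}(s) \;=\; \frac{(N+1)!\,[t_0, \ldots, t_{N+1}]_\tau\, e^{-s\tau}}{s^{N+1}},$$
an entire function of exponential type whose zero set is controlled by the knots. Since each $F_N$ is a probability distribution function, L\'{e}vy's theorem equates the weak convergence $F_N\to F$ at continuity points of $F$ with the pointwise convergence $\widehat{B_N}(i\xi) \to \widehat{F}(\xi)$ on $\R$, so the problem reduces to identifying all pointwise limits of $\widehat{B_N}$ as $N\to\iy$ and $\Om$ varies.

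Schoenberg's structural theorem characterizes a PFF $\Lambda$ by the transform identity
$$\int_{\R} e^{-st}\Lambda(t)\,dt \;=\; \frac{1}{\psi(s)}, \qquad \psi(s) \;=\; C\,e^{-\g s^2+\de s}\prod_{k\ge 1}(1+\al_k s)\,e^{-\al_k s},$$
with $\g \geq 0$, $\de\in\R$, $\sum_k \al_k^2 < \iy$, i.e.\ $\psi$ lying in the Laguerre--P\'{o}lya class. Thus Theorem \ref{T1.1} reduces to the claim that the set of pointwise limits of the family $\{\widehat{B_N}\}$ coincides with the set of reciprocals of Laguerre--P\'{o}lya entire functions.

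For necessity, I would invoke the classical fact that convolution with $B_N$ is variation-diminishing (Curry--Schoenberg); because this property is preserved under weak limits of densities with suitable tightness, and in turn characterizes PFFs (again by Schoenberg), the limit density $\Lambda$ is a PFF. An alternative analytic route applies Hurwitz's theorem to the sequence $\{1/\widehat{B_N}\}$ and combines it with exponential-type estimates to extract the Laguerre--P\'{o}lya factorization of the limit. For sufficiency, given a target PFF $\Lambda$ with transform $1/\psi$, I would truncate the Hadamard product of $\psi$ after $N$ factors and select a knot sequence $\Om_N$ so that $1/\widehat{B_N}$ matches the truncation up to an error vanishing with $N$; the Gaussian factor $e^{-\g s^2}$ with $\g>0$ is recovered in the limit by clustering many small linear factors, in the spirit of $e^{-\g s^2} = \lim_M \prod_{k=1}^{M}(1+\al_{k,M} s)(1-\al_{k,M} s)$ for suitable triangular arrays $\{\al_{k,M}\}$.

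The main obstacle will be this sufficiency step: realizing the full three-parameter family $(\g,\de,\{\al_k\})$ by concrete knot sequences, and verifying that $\sum_k \al_k^2 < \iy$ yields uniform convergence of the truncated products on compact subsets of $\R$ together with tightness of $\{F_N\}$ in the regime $\g>0$, where the knots must spread out with $N$ without allowing mass to escape to infinity. A secondary bookkeeping issue is absorbing the linear drift $\de s$ by translating the knot sets, which is handled by choosing the mean of $B_N$ to match that of $\Lambda$ at each stage.
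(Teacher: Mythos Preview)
The paper does not prove Theorem~\ref{T1.1}. It is quoted verbatim from Curry--Schoenberg \cite[Theorem~6]{CS1966} (see also \cite{CS1947} and \cite[Theorem~10.5.1]{K1968}) as background for the discussion in Section~\ref{SS1.1}, and no argument for it appears anywhere in the text. So there is nothing in the paper to compare your proposal against.

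That said, your outline is essentially the classical Curry--Schoenberg route: compute the characteristic function of $B_N$, use L\'{e}vy's continuity theorem to convert weak convergence of the distributions $F_N$ into pointwise convergence of characteristic functions, and then invoke Schoenberg's theorem that a probability density is a P\'{o}lya frequency function if and only if the reciprocal of its bilateral Laplace transform lies in the Laguerre--P\'{o}lya class. Your necessity argument via the variation-diminishing property (or, equivalently, Hurwitz's theorem applied to the reciprocal transforms) and your sufficiency argument via truncated Hadamard products with a clustering construction for the Gaussian factor are both standard ingredients of that proof. The obstacles you flag---realizing the Gaussian factor $e^{-\g s^2}$ by knot arrays while maintaining tightness, and absorbing the drift $e^{\de s}$ by translation---are the right ones, and they are handled in \cite{CS1966} and \cite[Ch.~7]{K1968}. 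If you want to carry this through you should consult those sources directly rather than the present paper.
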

In particular, since the characteristic function $\rchi_{[\de,\iy)}$ is
 a P\'{o}lya distribution function (see \cite[Sect. 10.5]{CS1966}),
 there exists a sequence of $B$-spline distributions $F_{N,\de}$ such that
 $\lim_{N\to\iy}F_{N,\de}(x)=\rchi_{[\de,\iy)}(x),\,\de\in\R$
 (see also Example \ref{Ex1.3a} with $\de=0$).

 Due to the criterion of Theorem \ref{T1.1}, if
 $\lim_{N\to\iy}F_N(x)=F(x)$
 for all x at which $F$ is continuous,
 then  $F(x)=\int_{-\iy}^x \Lambda(t)dt$, where $\Lambda$
is a P\'{o}lya frequency function.
The following result on the asymptotic behavior of $B$-splines
was established in \cite[Theorem 7]{CS1966}
(see also \cite[Theorem 10.5.2]{K1968}):
\begin{theorem}\label{T1.2} The following statements hold:\\
(i) If $F\ne\rchi_{[\de,\iy)},\,\de\in\R$, and $\Lambda\ne \Lambda_0$,
where
\ba
\Lambda_0(t):=\left\{\begin{array}{ll}
\left\vert\de_1\right\vert^{-1} e^{-t/\de_1+\de/\de_1-1},&t/\de_1\ge\de/\de_1-1,\\
0,&t/\de_1<\de/\de_1-1,
\end{array}\right.
\qquad \de\in\R,\quad\de_1\ne 0,
\ea
then
\beq\label{E1.1cc}
\lim_{N\to\iy}B_N(t,\Om)=\Lambda(t)
\eeq
uniformly on $\R$;\\
(ii) if $\Lambda=\Lambda_0$, then \eqref{E1.1cc} holds uniformly in t outside of an arbitrarily small neighborhood
of the point $t = \de- \de_1$, where $\Lambda_0$ is discontinuous;\\
(iii) if $F=\rchi_{[\de,\iy)},\,\de\in\R$, then $\lim_{N\to\iy}B_N(t,\Om)=0$
uniformly in t outside of an arbitrarily small neighborhood
of the point $t = \de$.
\end{theorem}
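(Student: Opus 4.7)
The plan is to upgrade the weak convergence $F_N\to F$ from Theorem \ref{T1.1} to uniform convergence of the densities $B_N$. The structural properties I would exploit are: each $B_N$ is nonnegative with $\int_\R B_N=1$, has compact support, and is log-concave on its support. Any P\'olya frequency limit $\Lambda$ is likewise log-concave, and the assumption $\Lambda\ne\Lambda_0$ forces $\Lambda$ to be continuous on all of $\R$ (the only single-jump exceptional shape in the P\'olya family being precisely $\Lambda_0$).

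First I would establish tightness. Since $F_N(x)\to F(x)$ at every continuity point of $F$ and $F(-\iy)=0$, $F(\iy)=1$, for each $\vep>0$ there is a compact $K\subset\R$ with $\int_K B_N\ge 1-\vep$ for all sufficiently large $N$. A log-concave probability density essentially supported on a fixed compact set admits a uniform $L^\iy$ bound away from the support boundary, and log-concavity then upgrades this into uniform Lipschitz control on any compact subset where the limit $\Lambda$ is bounded below. Case (i) follows from a compactness argument: Arzel\`a--Ascoli extracts a uniformly convergent subsequence on each compact set, uniqueness of the weak limit from Theorem \ref{T1.1} identifies the subsequential limit with $\Lambda$, and the tightness step promotes convergence to be uniform on all of $\R$.

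For case (ii), $\Lambda_0$ is a one-sided exponential with a unit jump at $t=\de-\de_1$. Outside any neighborhood of this point $\Lambda_0$ is analytic, so the equicontinuity/compactness reasoning of case (i) applies verbatim, while uniform convergence across the jump is impossible because each $B_N$ is continuous and the limit is not. For case (iii), $F=\rchi_{[\de,\iy)}$ concentrates all mass at $\de$, so tightness yields $\int_{|t-\de|\ge\vep}B_N(t)\,dt\to 0$ for every $\vep>0$. A nonnegative log-concave density whose $L^1$-mass on a one-sided region tends to zero must decay uniformly on that region (a log-concave profile is controlled in sup-norm by its integral over any interval where it stays above a threshold), giving $B_N\to 0$ uniformly outside any neighborhood of $\de$.

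The main obstacle is the second step: extracting uniform $L^\iy$ bounds and equicontinuity for the $B$-splines $B_N$ from log-concavity plus tightness. Since the knot sets $\Om=\Om_N$ vary with $N$, direct manipulation of the $B$-spline derivative recursion is awkward, and the required estimates must be intrinsic to log-concave probability densities, in the spirit of the P\'olya-frequency-function machinery developed in \cite{CS1966} and \cite[Chap.~10]{K1968}.
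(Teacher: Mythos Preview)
The paper does not prove Theorem~\ref{T1.2}: it is quoted in the introduction as a known result, with the proof attributed to Curry--Schoenberg \cite[Theorem~7]{CS1966} (see also \cite[Theorem~10.5.2]{K1968}). So there is no in-paper proof to compare your proposal against; any comparison would have to be with the original sources.

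As a standalone sketch, your strategy is reasonable in spirit but the acknowledged ``main obstacle'' is in fact the whole difficulty. You assert that log-concavity plus tightness yields uniform $L^\infty$ bounds and equicontinuity, but neither claim is justified. A log-concave density supported on an interval can spike arbitrarily high (take a triangle of small base), and equicontinuity is even more delicate. The argument in \cite{CS1966} does not proceed via Arzel\`a--Ascoli on the densities; it exploits the variation-diminishing property of P\'olya frequency functions and monotonicity of $F_N$ to upgrade pointwise convergence of $F_N$ to uniform convergence, and then uses the specific structural fact that a PF density is monotone on each side of a single maximum to pass from distributions to densities. Your outline would need a concrete lemma of the form ``a sequence of log-concave probability densities whose distribution functions converge pointwise to a continuous limit is uniformly bounded and equicontinuous on compacta,'' and you have not indicated how to prove it.
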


Theorem \ref{T1.2} was used in \cite[Sect. 9]{CS1966} and \cite[Sect. 4]{S1971}
to find the asymptotic behavior of certain $B$-splines
(see Examples \ref{Ex1.1a}, \ref{Ex1.2a}, and  \ref{Ex1.3a}).
\begin{example}\label{Ex1.1a}
{Equidistant knots},
$\Om=\left\{k-(N+1)/2: 0\le k\le N+1\right\}$.
The following
limit relation in terms of the Gaussian function holds
 for the scaled cardinal $B$-spline:
  \beq\label{E1.1c}
 \lim_{N\to\iy}\sqrt{(N+1)/12}\,\,
 B_{N}\left(\sqrt{(N+1)/12}\,t,\Om\right)
 =(2\pi)^{-1/2}\exp\left(-t^2/2\right)
 \eeq
 uniformly on $\R$.
 Relation \eqref{E1.1c} has been known at least since the early 1900s
 (see, Sommerfeld \cite{S1904}, Tricomi \cite{T1933}, Curry-Schoenberg \cite[Example 9.4]{CS1966}, and
 Unser et al. \cite{UAE1992}).
 This relation was extended by Brinks
 \cite[Theorem 2]{B2008}
 to the derivatives of $B_{N}$.
 Xu and Wang \cite[Theorem 3.1]{XW2011}
 found the convergence orders of the approximation processes
  for the derivatives of $B_{N}$ in \eqref{E1.1c}.
\end{example}
\begin{example}\label{Ex1.1aa} Not too concentrated knots.
Let $\Om=\left\{t_k:  0\le k\le N+1\right\} \subset\R$, where
the knots satisfy the following two conditions: 1.
$\sum_{k=0}^{N+1}t_k=0$ and 2.
$\lim_{N\to\iy}\left(\sum_{k=0}^{N+1}\left\vert t_k\right\vert^3\right)/
\left(\sum_{k=0}^{N+1} t_k^2
\right)^{3/2}=0$.
Using probabilistic techniques, Rzeszut and Wojciechowski \cite[Theorem 1]{RW2024} extended \eqref{E1.1c}
to the wide class of  sets $\Om$ in the following form:
\beq\label{E1.1c0}
 \lim_{N\to\iy}\,\,
 \frac{\left(\sum_{k=0}^{N+1} t_k^2
\right)^{1/2}}{N+2}\,\,
 B_{N}\left(\frac{\left(\sum_{k=0}^{N+1} t_k^2
\right)^{1/2}}{N+2}\,\,t,\Om\right)
 =(2\pi)^{-1/2}\exp\left(-t^2/2\right)
 \eeq
uniformly on $\R$. In addition, the authors generalized and strengthened results from
\cite{B2008} and \cite{XW2011} by extending
\eqref{E1.1c0} to the derivatives of $B_{N}$ and finding
the convergence orders of the approximation processes with power weights
  for the derivatives of $B_{N}$ in \eqref{E1.1c0}.
\end{example}
\begin{example}\label{Ex1.2a}
{Knots of reciprocals},
$N=2\nu,\,\nu\in\N,\,\Om=\left\{1/(2k-1): -\nu\le k\le \nu+1\right\}$.
The following relation in terms of a non-Gaussian function holds:
\beq\label{E1.1c1}
\lim_{\nu\to\iy}\frac{1}{\nu+1}B_{2\nu}\left(\frac{t}{2\nu+2},\Om\right)=\frac{2}{\pi \cosh t}
\eeq
uniformly on $\R$
(see \cite[Example 9.5]{CS1966}).
Comparing \eqref{E1.1c} and \eqref{E1.1c1}, the authors of \cite{CS1966} noted that
the knots density in \eqref{E1.1c1} is higher near the origin, and
as the result, $1/\cosh t$
decays so much more slowly than $\exp\left(-t^2/2\right)$.
In addition, it is easy to see that condition 2. of Example \ref{Ex1.1aa} is not satisfied for these knots.
\end{example}
\begin{example}\label{Ex1.3a}
{Chebyshev knots},
$\Om=\left\{\cos \frac{k \pi}{N+1}: 0\le k\le N+1\right\}$.
The knots coincide with the
zeros of the polynomial $(x^2-1)U_N(x)$ of degree $N+2$,
where $U_N$ is the Chebyshev polynomial of the second kind.
In particular, $B_N$ is a perfect $B$-spline on $[-1,1]$
(see Schoenberg \cite[Theorem 1]{S1971}).
Schoenberg \cite[Theorem 4]{S1971} also showed that
\beq\label{E1.1c2}
\lim_{N\to\iy} B_N\left(t,\Om\right)=0,\qquad t\in \R\setminus D,
\eeq
uniformly in t outside of an arbitrarily small neighborhood $D$
of the point $t = 0$.
In addition, the knots satisfy both conditions of Example \ref{Ex1.1aa} since
$\sum_{k=0}^{N+1}\cos^2 \frac{k \pi}{N+1}=(N+3)/2$ (see, e.g., \cite[Eq. 1.351(2)]{GR1994}).
Therefore, the following Gaussian asymptotic  is a consequence
of \eqref{E1.1c0}:
\beq\label{E1.1c3}
\lim_{N\to\iy}\,\,
 \frac{2^{-1/2}(N+3)^{1/2}}{N+2}\,\,
 B_{N}\left(\frac{2^{-1/2}(N+3)^{1/2}}{N+2}\,\,t,\Om\right)
 =(2\pi)^{-1/2}\exp\left(-t^2/2\right).
\eeq
Hence, \eqref{E1.1c3} describes the asymptotic behavior of $B_N\left(t,\Om\right)$
in a neighborhood of the origin. Both asymptotics \eqref{E1.1c2} and \eqref{E1.1c3} verify convergence of
the $B_N\left(t,\Om\right)$ to the delta function as $N\to \iy$ (cf. Theorem \ref{T1.2} (iii)).
\end{example}

However, there are only a few examples (including relations \eqref{E1.1c1} and \eqref{E1.1c2})
of sequences of $B$-splines that converge to a non-Gaussian function.

  In this paper, we prove
   limit relations of the form:
  \beq\label{E1.1a}
  \lim_{N\to\iy}A_N M\left(B_N^*\left(C_N\cdot,\Om\right),s\right)
  =M(\varTheta(\cdot),s)
  \eeq
  for so-called "associated" $B$-splines $B_N^*(t,\Om):=t^{-N} B_N(t,\Om)$
  with special sets of knots
  that include the squared equidistant knots and
  the squared zeros of the Gegenbauer and Hermite polynomials
  (see Theorem \ref{T5.1}).
  Here, $M(f,s)$ is the Mellin transform of
  $f,\,\varTheta$ is a theta-like function defined in Sect. 4,
   and $A_N,C_N$ are constants
  given explicitly.

  In certain cases,  pointwise asymptotic relations of the form
  \beq\label{E1.1b}
  \lim_{N\to\iy}A_N B_N^*\left(C_N t,\Om\right)
  =\varTheta(t),\qquad t\in [0,\iy),
  \eeq
  and the corresponding asymptotics for $B_N$ follow from \eqref{E1.1a}
  (see Theorem \ref{T5.2}). In particular, we prove \eqref{E1.1b} for
  $B$-splines with Chebyshev-like knots (see Corollaries \ref{C5.5}
  and \ref{C5.6})
  and find the asymptotic behavior of the perfect $B$-spline from Example \ref{Ex1.3a}
  in a right neighborhood of the point $-1$.
  (see Corollary \ref{C5.6} and Remark \ref{R5.7}).

  The main results of the paper, along with their proofs, are given
  in Section \ref{S5}. The proof of \eqref{E1.1a} of Theorem \ref{T5.1} is based on
  polynomial interpolation of power functions developed by the author
  in \cite{G2013} (see Section \ref{S2}), properties of $B$-splines and
  associated $B$-splines (see Section \ref{S3}),
   and properties of certain theta-like functions (see Section \ref{S4}).

   To derive \eqref{E1.1b} of Theorem \ref{T5.2} from \eqref{E1.1a}
   and also to prove Corollary \ref{C5.5},
   we need certain
   inequalities for the Fourier and Mellin transforms of analytic functions
   and some estimates of the Chebyshev polynomials (see Section \ref{S6}).
   In addition, Section \ref{S6} contains certain standard facts about the
   Mellin transform.
Notation and some preliminaries are given below.

\subsection{Notation and Preliminaries}\label{SS1.2}
Let $\N:=\{1,\,2,\ldots\},\,\R$ be the
set of all real numbers,
$\CC=\R+i\R$ be the set of all complex numbers,
and $\PP_n$ be the set
of all univariate algebraic polynomials with
real coefficients of degree at most $n$.
Next, we denote by $C(I)$ the set of all
complex-valued continuous
functions on a finite or infinite interval
$I$ and by
$C^{(M)}(I),\,M\in\N$, the set of all complex-valued
$M$-times continuously
differentiable functions on $I$.
Next, let $L_1(I)$ be the space of all integrable
complex-valued functions $F$
 on an interval $I$ with the finite norm
 $\|F\|_{L_1(I)} :=\int_I\vert F(y)\vert dy$.

In addition, we  need the set $\LL$ of all rapidly decreasing
functions $F\in C([0,\iy))$, satisfying
the condition $\lim_{t\to\iy}t^\eta F(t)=0$
for all $\eta\in[0,\iy)$.
Note that if  $F\in \LL$, then the function
$t^\eta F(t)$ is uniformly continuous
on $[0,\iy)$ and belongs to $L_1([0,\iy))$
for all $\eta\in[0,\iy)$.

We also use the following special functions:
the gamma function $\G(s)$,
the upper incomplete gamma function $\G(s,\tau),
\,\tau\in[0,\iy)$,
the Riemann zeta function $\z(s)$,
the Dirichlet beta function $\be(s)$,
and the theta functions $\vartheta_1(z,s)$ and
$\vartheta_4(z,s),\,z\in\CC,\,s\in\CC$.

Throughout the paper, $N\in\N,\,s\in\CC$, and the index $d$ takes two values: either $d=0$ or $d=1$.
We also use the following notation:
 \beq\label{E1.1}
 h_d(z):=\left\{\begin{array}{ll}
 \cosh z, &d=0,\\
 \sinh z, &d=1,\end{array}\right.
 \qquad z\in\CC.
 \eeq

 Next, let $f\in C([A,B])$ and let $R_{n+1}\in\PP_{n+1}$ be a polynomial with
 zeros $z_k\in [A,B],\,1\le k\le n+1$, of multiplicity $1$.
 The Lagrange interpolating polynomial to $f$ at the nodes $z_k,\,1\le k\le n+1$,
 is denoted by $L_n(z)=L_n\left(z, f(z), R_{n+1}(z)\right)\in\PP_n$.

We also use the notations
$\lesssim$ and $\gtrsim$ in the following sense:
$
\vphi(\tau,\gamma,\ldots)\lesssim \delta(\tau,\gamma,\ldots)$ or
$\vphi(\tau,\gamma,\ldots)\gtrsim \delta(\tau,\gamma,\ldots)$
means that there exists a constant $C>0$  independent of the essential parameters
$\tau,\,\gamma,\ldots$ such that
$\vphi(\tau,\gamma,\ldots)\leq C \delta(\tau,\gamma,\ldots)$ or
$\vphi(\tau,\gamma,\ldots)\geq C \delta(\tau,\gamma,\ldots)$
for the relevant ranges of $\tau,\gamma,\ldots$.
The dependence of $C$ on certain parameters is indicated
by using subscripts, e.g., $\lesssim_{a,b}$. The absence of the subscripts means that
$C$ is an absolute constant.

\subsection{More Preliminaries: Special Sequences of Polynomials}\label{SS1.3}
\setcounter{equation}{0}
Here, we define a special class of sequences of algebraic polynomials
whose zeros  serve as interpolation nodes. Examples of sequences from this
class are discussed as well.
The definition and the examples are taken from author's monograph \cite[Sect. 3]{G2013}.

Let $\be=\be_{(d)}=\left(\be_N\right)_{N=1}^\iy$ and $\g=\g_{(d)}=\left(\g_N\right)_{N=1}^\iy$ be
increasing sequences of positive numbers and $\de=\de_{(d)}=\left(\de_N\right)_{N=1}^\iy$
a decreasing sequence of positive numbers satisfying the conditions
\ba
\lim_{N\to \iy}\be_N=\lim_{N\to \iy}\g_N=\lim_{N\to \iy}\de_N^{-1}=\iy.
\ea

\begin{definition} \label{D1.1}
\cite[Definition 3.1.1]{G2013}
Let $\Pi_d:=\left(P_{2N+d}\right)_{N=1}^\iy$ be a sequence of even (if $d=0$) or odd (if $d=1$)
polynomials $P_{2N+d}(z)=z^{2N+d}+\ldots\in\PP_{2N+d}$,
having only real zeros $\pm x_k=\pm x_{k,N,d},\,1-d\le k\le N,\,x_0=0$,
of multiplicity $1$ and satisfying
the following asymptotic property:
for any $z\in\CC$ with $|z|\le \g_N$, the following inequality
 holds:
 \ba
 \left|\frac{\beta_N^d\,P_{2N+d}(z/\beta_N)}{P_{2N+d}^{(d)}(0)}
 -\cos(z-d\pi/2)\right|
 \lesssim_{\Pi_d}
 \delta_N \min(|z|^2,1)h_d(\vert z\vert).
 \ea
 Then we write $\Pi_d\in\PPP_d(\be,\g,\de)$.
 \end{definition}
 Examples of polynomial sequences, satisfying Definition \ref{D1.1},
 include the Gegenbauer and Hermite polynomials and,
 in addition, the polynomials with equidistant zeros.
 In each example presented below, we include formulae for $P_{2N+d}$ and $\be_N$
  and also conditions on $\g_N$ and $\de_N$ that guarantee the inclusion
  $\left(P_{2N+d}\right)_{N=1}^\iy\in\PPP_d(\be,\g,\de)$.
  Other examples (such as, the Williams--Apostol and Lommel polynomials)
  and a counterexample of the Laguerre polynomials for $\al\ne -1/2$ are given in
  \cite[Sect. 3.2]{G2013}.

 \begin{example}\label{Ex1.2}
 {Normalized Gegenbauer polynomials on $[-1,1]$} (see \cite[Sect. 3.2.1]{G2013}): for $\la\ge 0$,
 \ba
 &&P_{2N+d}=\frac{\G(\la)(2N+d)!}{\G(2N+d+\la)2^{2N+d}}C^\la_{2N+d},\\
 &&\be_N=2N+d+\la, \qquad \g_N=o(N^{2/3}),\qquad \g_N\gtrsim \log(N+1),\qquad \de_N\ge \g_N^3/N^2.
 \ea
 The normalized Legendre polynomials and the normalized Chebyshev polynomials
 $T_{2N+d}$ and $U_{2N+d}$ of the first and second kinds
 are special cases of $P_{2N+d}$ for $\la=1/2,\,0,\,1$, respectively.
 \end{example}
 \begin{example}\label{Ex1.3}
 {Polynomials with equidistant zeros on $[-1,1]$} (see \cite[Sect. 3.2.2]{G2013}):
 \ba
 &&P_{2N+d}(z)=z^d\prod_{k=1}^N
 \left(\left(\frac{2k+d-1}{2N+d-1}\right)^2-z^2\right),\\
 &&\be_N=(2N+d-1)\pi/2, \qquad \g_N=o(N^{1/2}), \qquad \de_N\ge \g_N^2/N.
 \ea
 \end{example}
 \begin{example}\label{Ex1.4}
 {Normalized Hermite polynomials on $\R$} (see \cite[Sect. 3.2.4]{G2013}):
 \ba
 &&P_{2N+d}=2^{-(2N+d)}H_{2N+d},\\
 &&\be_N=\sqrt{4N+2d+1}, \qquad \g_N=o(N^{1/2}),\qquad \g_N\gtrsim \log(N+1),\qquad \de_N\ge \g_N^2/N.
 \ea

 \end{example}

 \section{Polynomial Interpolation of Power Functions}\label{S2}
\setcounter{equation}{0}
Let $h_d$ be defined by \eqref{E1.1}.
The following asymptotic representations for the polynomial interpolation differences
 were proved in \cite[Corollary 4.3.1 (a)]{G2013}.

\begin{theorem}\label{T2.1}
 Let $\left(P_{2N+d}\right)_{N=1}^\iy\in\PPP_d(\be,\g,\de)$
 (see Definition \ref{D1.1})
 and let $\left(y_N\right)_{N=1}^\iy$ be a sequence of real nonzero numbers
 such that $\lim_{N\to\iy} \be_N\left\vert y_N\right \vert =\iy$.
 Then the following statements hold:\\
 (a) If
 $\mathrm{Re}\, s>d$, and $s-d\ne 2,\,4,\ldots,$
 then
 \bna\label{E2.1}
 &&\left|y_N\right|^{s-d}-
 L_{2N}\left(y_N,|y|^{s-d},y^{1-d}P_{2N+d}(y)\right)\nonumber\\
 &&=\frac{2\sin((s-d)\pi/2)}{\pi\beta_N^{s-d}}
     \frac{P_{2N+d}(y_N)}{y_N^{d}P_{2N+d}^{(d)}(0)}
     \left(\int_0^\iy\frac{t^{s-1}}{h_d(t)}dt+o(1)\right),
 \ena
 as $N\to\iy$
 and the convergence in \eqref{E2.1}
 is uniform on the line $\mathrm{Re}\,s=r>d.$\\
 (b) For a fixed $m\in\N,\,1\le m\le N$,
 \bna\label{E2.2}
 &&\left|y_N\right|^{2m}\log\left|y_N\right|
 -L_{2N}\left(y_N,|y|^{2m}\log|y|,y^{1-d}P_{2N+d}(y)\right)\nonumber\\
 &&=\frac{(-1)^m} {\beta_N^{2m}}
 \frac{P_{2N+d}(y_N)}{y_N^{d}P_{2N+d}^{(d)}(0)}
 \left(\int_0^\iy\frac{t^{2m+d-1}}{h_d(t)}dt+o(1)\right),
 \ena
 as $N\to\iy$.
 \end{theorem}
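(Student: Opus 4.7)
My plan is to prove (a) via a contour integral representation of the Lagrange remainder combined with the cosine approximation of $P_{2N+d}$ supplied by Definition~\ref{D1.1}, and to derive (b) from (a) by differentiation in $s$. Set $R_{2N+1}(y):=y^{1-d}P_{2N+d}(y)$; this is an odd polynomial with $2N+1$ simple real zeros (the interpolation nodes). Since $|y|^{s-d}$ is even on $\R$, I extend it to $\CC$ by $F(z):=z^{s-d}$ on $\RE z>0$ and $F(z):=(-z)^{s-d}$ on $\RE z<0$ (principal branches in each half-plane), so $F$ is holomorphic off the imaginary axis with jump $2i\sin((s-d)\pi/2)\,|t|^{s-d}$ across $z=it$. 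This jump is the source of the $\sin((s-d)\pi/2)/\pi$ prefactor in \eqref{E2.1}. The standard contour integral representation of the remainder gives
\[
|y_N|^{s-d}-L_{2N}\bigl(y_N,|y|^{s-d},R_{2N+1}\bigr)=\frac{R_{2N+1}(y_N)}{2\pi i}\oint_\Gamma\frac{F(z)\,dz}{R_{2N+1}(z)(z-y_N)},
\]
for a contour $\Gamma$ enclosing all nodes and $y_N$, which I deform past the imaginary axis out to infinity (the large-circle contribution vanishes because the integrand decays like $|z|^{s-d-2N-2}$), collapsing the integral onto the imaginary-axis branch cut.

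Next, I rescale by $z=w/\be_N$. Definition~\ref{D1.1} replaces $\be_N^d P_{2N+d}(w/\be_N)/P_{2N+d}^{(d)}(0)$ by $\cos(w-d\pi/2)$ uniformly on $|w|\le\g_N$, and on the line $w=it$ one has $\cos(it-d\pi/2)=i^d h_d(t)$ (from $\cos(it)=\cosh t$ and $\cos(it-\pi/2)=i\sinh t$). The hypothesis $\be_N|y_N|\to\iy$ lets me replace $w-\be_N y_N$ by $-\be_N y_N$ in the denominator, with tail contributions from $|w|>\g_N$ absorbed into $o(1)$ thanks to the exponential decay of $1/h_d(t)$. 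Tracking powers of $\be_N$ and $y_N$, the remaining integral becomes $\int_0^\iy t^{s-1}/h_d(t)\,dt$, producing \eqref{E2.1}.

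For part (b), both sides of \eqref{E2.1} are analytic in $s$ on $\RE s>d$ (with $o(1)$ uniform on vertical lines), so Cauchy's formula on a small circle around $s_0=2m+d$ legitimizes term-by-term differentiation. Since $\partial_s|y|^{s-d}=|y|^{s-d}\log|y|$ and Lagrange interpolation is linear in the interpolated function, the LHS yields the interpolation error for $|y|^{2m}\log|y|$. On the RHS, $\sin((s-d)\pi/2)$ has a simple zero at $s-d=2m$ with derivative $(\pi/2)(-1)^m$, so the prefactor $2\sin((s-d)\pi/2)/\pi$ differentiates to $(-1)^m$; contributions from $\partial_s\be_N^{-(s-d)}=-\log\be_N\cdot\be_N^{-2m}$ and from differentiating $\int_0^\iy t^{s-1}/h_d(t)\,dt$ are annihilated by the vanishing sine, giving \eqref{E2.2}. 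The chief technical obstacle is uniform control of the error across $\RE s=r>d$ in the contour deformation: the cosine approximation holds only for $|w|\le\g_N$, so tail contributions from the imaginary-axis integral must be dominated using the exponential decay of $1/h_d(t)$ uniformly in $s$ along vertical lines.
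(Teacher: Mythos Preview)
Your approach is correct and matches the method the paper indicates: the paper does not prove Theorem~\ref{T2.1} in the text but cites \cite[Corollary 4.3.1(a)]{G2013}, adding that the proof of part (a) rests on Proposition~\ref{P2.6}, which is precisely the integral representation your contour-deformation argument produces (collapse to the imaginary axis, pick up the branch-cut jump, then feed in the cosine approximation from Definition~\ref{D1.1}). One small caveat on part (b): to pass to the $s$-derivative via Cauchy's formula on a circle about $s_0=2m+d$ you need the $o(1)$ in \eqref{E2.1} uniform on a two-dimensional compact neighborhood, not merely on vertical lines as stated; since the estimates in your proof of (a) depend on $s$ only through $t^{\RE s-1}$, they automatically give uniformity for $\RE s$ in a compact interval, so just record this stronger uniformity when you carry out (a).
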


 \begin{remark}\label{R2.2}
 Since $y^{1-d}P_{2N+d}(y)=y\prod_{k=1}^N \left(y^2-x_k^2\right)$,
 the interpolation nodes in \eqref{E2.1} and \eqref{E2.2} are
 $0,\,\pm x_1, \ldots, \pm x_N$.
 \end{remark}

 \begin{remark}\label{R2.3}
 Relations between the integral in \eqref{E2.1} (or \eqref{E2.2})
 and zeta functions are given by the following formula
 (see, e.g., \cite[Sect. 1.12]{ErdI1953}):
 \ba
 \int_0^\iy\frac{t^{s-1}}{h_d(t)}dt=
 \left\{\begin{array}{ll}
 2\Gamma(s)\be(s), &d=0,\\
  2\Gamma(s)\left(1-2^{-s}\right)\z(s), &d=1,\end{array}\right.
 \qquad \mbox{Re}\,s>d.
 \ea
 \end{remark}

 Since the functions $\left|y\right|^{s-d}$ and
 $\left|y\right|^{2m}\log\left|y\right|$ and their
 interpolation polynomials $L_{2N}$ from Theorem \ref{T2.1}
 are even, we can make the substitution $u=y^2$ and discuss
 the corresponding interpolation differences on $[0,\iy)$.
 Let us define
 \beq\label{E2.3}
 Q_N(u):=\prod_{k=1}^N \left(u-x^2_k\right),
 \eeq
where $x_k=x_{k,N,d},\,1\le  k\le N,$ are positive zeros of $P_{2N+d}$.
It is easy to see that
\beq\label{E2.4}
P_{2N+d}^{(d)}(0)=Q_N(0).
 \eeq

In the following corollary, we rewrite relations \eqref{E2.1} and \eqref{E2.2}
 with the interpolation nodes $0,\, x_1^2, \ldots, x_N^2$.

 \begin{corollary}\label{C2.4}
 Let $\left(P_{2N+d}\right)_{N=1}^\iy\in\PPP_d(\be,\g,\de)$
 and let $Q_N$ be defined by \eqref{E2.3},
 where $x_k,\,1\le  k\le N,$ are the positive zeros of $P_{2N+d}$.
 In addition, let $\left(u_N\right)_{N=1}^\iy$ be a sequence of positive numbers
 such that $u_N\notin \{x_1,\ldots,x_N\}$ and
 $\lim_{N\to\iy} \be_N u_N =\iy$.
 Then the following statements hold:\\
 (a) If
 $\mathrm{Re}\, s>d$ and $s-d\ne 2,\,4,\ldots,$
 then
 \bna\label{E2.5}
 \lim_{N\to\iy}\frac{\beta_N^{s-d}Q_N(0)}{Q_N\left(u_N^2\right)}
 \left(u_N^{s-d}-
 L_{N}\left(u^2_N,u^{\frac{s-d}{2}},u Q_N(u)\right)\right)
 =\frac{2\sin((s-d)\pi/2)}{\pi}
     \int_0^\iy\frac{t^{s-1}}{h_d(t)}dt.
 \ena
 The convergence in \eqref{E2.5}
 is uniform on the line $\mathrm{Re\,s}=r>d$.\\
 (b) For a fixed $m\in\N$,
 \bna\label{E2.6}
 \lim_{N\to\iy}\frac{\beta_N^{2m}Q_N(0)}{Q_N\left(u_N^2\right)}
 \left(u_N^{2m}\log u^2_N
 -L_{N}\left(u^2_N,u^{m}\log u,u Q_N(u)\right)\right)
 =(-1)^m
 \int_0^\iy\frac{t^{2m+d-1}}{h_d(t)}dt.
 \ena
 \end{corollary}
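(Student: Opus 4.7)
The plan is to derive Corollary \ref{C2.4} from Theorem \ref{T2.1} by the substitution $u = y^2$, exploiting the evenness of the interpolated functions together with the symmetry of the interpolation nodes. This is the natural setup because $y^{1-d}P_{2N+d}(y)$ has the symmetric zero set $\{0, \pm x_1, \dots, \pm x_N\}$ (Remark \ref{R2.2}) and both $|y|^{s-d}$ and $|y|^{2m}\log|y|$ are even functions of $y$.

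First I would observe that the Lagrange interpolants $L_{2N}$ appearing in \eqref{E2.1} and \eqref{E2.2} are necessarily even polynomials in $y$, since the data values at $\pm x_k$ coincide and the nodes are symmetric. Hence each such $L_{2N}$ admits a representation $\widetilde{L}(y^2)$ with $\deg\widetilde{L}\le N$. Restricting to $u = y^2 \in\{0, x_1^2, \dots, x_N^2\}$, which is exactly the zero set of $uQ_N(u)$, the polynomial $\widetilde{L}$ interpolates $u^{(s-d)/2}$ in case (a), and $\tfrac{1}{2}u^m\log u$ in case (b) (because $\log|y|=\tfrac12\log u$ on $y>0$). By uniqueness of Lagrange interpolation this identifies $\widetilde L(u)$ with $L_N(u,u^{(s-d)/2},uQ_N(u))$ in (a) and with $\tfrac{1}{2}L_N(u,u^m\log u,uQ_N(u))$ in (b).

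Next, setting $y_N := u_N > 0$, the hypothesis $\beta_N u_N \to\iy$ matches $\beta_N|y_N|\to\iy$, so Theorem \ref{T2.1} applies. The prescribed zero structure forces $P_{2N+d}(y) = y^d Q_N(y^2)$, which together with \eqref{E2.4} gives
\[
\frac{P_{2N+d}(y_N)}{y_N^d P_{2N+d}^{(d)}(0)} \;=\; \frac{Q_N(u_N^2)}{Q_N(0)}.
\]
Inserting these identifications into \eqref{E2.1} and \eqref{E2.2} and multiplying through by $\beta_N^{s-d}Q_N(0)/Q_N(u_N^2)$ in (a) and $\beta_N^{2m}Q_N(0)/Q_N(u_N^2)$ in (b) produces \eqref{E2.5} and \eqref{E2.6}. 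The uniformity claim on $\mathrm{Re}\,s = r > d$ in part (a) is inherited verbatim from Theorem \ref{T2.1}(a), since the factor $Q_N(0)/Q_N(u_N^2)$ does not depend on $s$.

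The step requiring the most care is bookkeeping rather than analysis: in part (b) one must track the factor $\tfrac{1}{2}$ coming from $\log|y|=\tfrac{1}{2}\log u$ through the substitution, so that the constant on the right of \eqref{E2.6} correctly reflects the normalization; verifying the factorization $P_{2N+d}(y)=y^d Q_N(y^2)$ by matching leading coefficients against the prescribed zero sets is similarly routine. Beyond these bookkeeping checks, the argument reduces to a clean change of variables combined with the symmetry of the node configuration.
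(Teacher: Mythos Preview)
Your proposal is correct and follows exactly the approach of the paper: set $u_N=|y_N|$, use the evenness of the interpolated functions and the symmetry of the nodes to pass from $L_{2N}$ in $y$ to $L_N$ in $u=y^2$, and rewrite the coefficient via $P_{2N+d}(y)=y^dQ_N(y^2)$ together with \eqref{E2.4}. The paper's proof says precisely this in two lines; your write-up simply spells out the evenness argument and the bookkeeping (including the $\tfrac12$ from $\log|y|=\tfrac12\log u$) more explicitly.
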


 \begin{proof}
 Relations \eqref{E2.5} and \eqref{E2.6} immediately follow from
 \eqref{E2.1}-\eqref{E2.4} if we set
 $u_N=\left\vert y_N\right\vert$.
 The uniform  convergence in \eqref{E2.5} follows from
 Theorem \ref{T2.1} (a).
 \end{proof}

 The proof of Theorem \ref{T5.1} is based on Theorem \ref{T2.1}.
 The following proposition is used in the proof of Theorem \ref{T5.2}.
 Note that the proof of Theorem \ref{T2.1} (a) is based on this proposition as well.

 \begin{proposition}\label{P2.6}
 \cite[Eq. (2.3.21)]{G2013}
 Let $G_{2N}\in\PP_{2N}$ be an even polynomial with $2N$ real zeros of multiplicity $1$,
 and let $\left(y_N\right)_{N=1}^\iy$ be a sequence of real nonzero numbers.
 If
 $0<\mathrm{Re}\, s<2N+1$, and $s\ne 2,\,4,\ldots,$
 then
 \beq\label{E2.10}
 \left|y_N\right|^{s}-
 L_{2N}\left(y_N,|y|^{s},y G_{2N}(y)\right)
 =\frac{2\sin(s\pi/2)}{\pi}G_{2N}(y_N)
     \bigintss_0^\iy\frac{t^{s-1}}{\left(1+\left(t/y_N\right)^2\right)G_{2N}(i t)}dt.
 \eeq
 \end{proposition}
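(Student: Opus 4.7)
The plan is to start from the Mellin--type identity
\[
|y|^s=\frac{2\sin(s\pi/2)}{\pi}\int_0^\iy\frac{t^{s-1}\,dt}{1+(t/y)^2},\qquad y\in\R\setminus\{0\},\ 0<\RE s<2,
\]
which follows from $\int_0^\iy u^{s-1}(1+u^2)^{-1}\,du=\pi/(2\sin(s\pi/2))$ via the substitution $t=|y|u$. The idea is to move the finite--dimensional Lagrange interpolation operator under the integral sign, reducing the task, for each fixed $t>0$, to interpolating at the $2N+1$ zeros of $yG_{2N}(y)$ the rational function
\[
f_t(y):=\frac{1}{1+(t/y)^2}=\frac{y^2}{y^2+t^2},
\]
whose only complex singularities are the simple poles $y=\pm it$.

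To compute the interpolation error of $f_t$, I would apply the residue theorem to
\[
I_R:=\frac{1}{2\pi i}\oint_{\Gamma_R}\frac{f_t(w)\,dw}{wG_{2N}(w)(w-y_N)},
\]
where $\Gamma_R$ is a positively oriented circle of radius $R$ large enough to enclose every interpolation node, the point $y_N$, and the poles $\pm it$. The integrand decays like $|w|^{-2N-2}$ on $\Gamma_R$, so $I_R\to 0$ as $R\to\iy$. Gathering residues on the other side: the $2N+1$ residues at the zeros of $wG_{2N}(w)$ reassemble, by the Lagrange formula, into $-L_{2N}(y_N,f_t,yG_{2N})/[y_N G_{2N}(y_N)]$; the simple pole at $w=y_N$ contributes $f_t(y_N)/[y_N G_{2N}(y_N)]$; and the two residues at $\pm it$, using $G_{2N}(-it)=G_{2N}(it)$ (evenness of $G_{2N}$) together with $(it-y_N)(-it-y_N)=y_N^2+t^2$, sum to $-y_N/[(y_N^2+t^2)G_{2N}(it)]$. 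Setting the total to zero gives
\[
f_t(y_N)-L_{2N}(y_N,f_t,yG_{2N})=\frac{y_N^2\,G_{2N}(y_N)}{(y_N^2+t^2)\,G_{2N}(it)}.
\]

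Multiplying by $(2/\pi)\sin(s\pi/2)\,t^{s-1}$ and integrating over $t\in(0,\iy)$, I would invoke Fubini's theorem (absolute convergence holds for $0<\RE s<2$; the interpolation node $y=0$ requires no special treatment because $|0|^s=0=f_t(0)$) to obtain
\[
|y_N|^s-L_{2N}(y_N,|y|^s,yG_{2N})=\frac{2\sin(s\pi/2)}{\pi}\,G_{2N}(y_N)\int_0^\iy\frac{t^{s-1}\,dt}{(1+(t/y_N)^2)\,G_{2N}(it)},
\]
which is \eqref{E2.10} for $0<\RE s<2$. Extension to the full strip $0<\RE s<2N+1$ (with $s\ne 2,4,\ldots$, where $|y|^{2m}$ is a polynomial of degree $\le 2N$ and both sides vanish trivially) then follows by analytic continuation: the left side is entire in $s$ for fixed $y_N\ne 0$, while the right side is holomorphic on $0<\RE s<2N+2$ since $|G_{2N}(it)|\gtrsim t^{2N}$ at infinity makes the integrand $O(t^{\RE s-2N-3})$.

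The main obstacle is the residue bookkeeping in the contour step: one has to simultaneously account for the $2N+1$ nodal residues, the spurious pole at $y_N$, and the two poles of $f_t$, while confirming the nonvanishing statements $G_{2N}(it)\ne 0$ for $t>0$ and $G_{2N}(0)\ne 0$. Both facts follow from the hypothesis that all $2N$ simple zeros of $G_{2N}$ are real and, by evenness, pair as $\pm x_k$ with $x_k\ne 0$. Once these checks are in place, the entire argument reduces to elementary partial fractions and the identity $(it-y_N)(-it-y_N)=y_N^2+t^2$.
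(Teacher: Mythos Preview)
Your argument is correct. The Mellin identity, the residue computation for the interpolation error of $f_t(y)=y^2/(y^2+t^2)$, the Fubini step, and the analytic continuation all check out; the bookkeeping at the nodes, at $y_N$, and at $\pm it$ is accurate, and the observation that the node $y=0$ causes no trouble because $f_t(0)=0=|0|^s$ is exactly what is needed.

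As for comparison with the paper: the paper does not prove this proposition at all. It simply quotes it as Eq.~(2.3.21) of the author's earlier monograph \cite{G2013} and uses it as a black box in the proof of Theorem~\ref{T5.2}. So there is no proof in the present paper to compare your argument against. Your approach---representing $|y|^s$ by the Mellin kernel $(1+(t/y)^2)^{-1}$, computing the Lagrange error for this rational kernel via residues, and then continuing analytically in $s$---is the natural route, and is presumably close in spirit to the original argument in \cite{G2013}.

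One small remark: your contour argument tacitly assumes that $y_N$ is not one of the interpolation nodes, so that the pole at $w=y_N$ is simple and disjoint from the nodal poles. If $y_N$ happens to coincide with a node, both sides of \eqref{E2.10} vanish trivially (the left because interpolation is exact at nodes, the right because $G_{2N}(y_N)=0$ when $y_N=\pm x_k$, and both sides are zero when $y_N=0$), so the identity still holds; you may wish to mention this in a sentence.
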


 \section{Properties of $B$-splines and Associated $B$-splines}\label{S3}
\setcounter{equation}{0}

\begin{definition}\label{D3.1}
The $B$-spline $B_N$ of degree $N$ with the set of knots
$\Om=\left\{t_0,\,t_1,\ldots,t_{N+1}\right\}   \subset \R$
is defined by the formulae
\beq\label{E3.1}
B_N(t)=B_N(t,\Om):=(N+1)\sum_{v\in\Om}\frac{(v-t)^N_+}{W^\prime(v)},
\quad t\in\R;
\qquad W(u):=\prod_{v\in \Om}(u-v).
\eeq
\end{definition}

\begin{definition}\label{D3.2}
The associated $B$-spline $B_N^*$ with the set of knots
$\Om=\left\{t_0,\,t_1,\ldots,t_{N+1}\right\}\subset [0,\iy)$
is defined by the formula
\beq\label{E3.2}
B_N^*(t)=B_N^*(t,\Om):=t^{-N}B_N(t,\Om),\qquad t\in\R,
\eeq
where $B_N^*(0):=\lim_{t\to 0+}B_N^*(t)$.
\end{definition}

The next three properties of $B_N$ are well known
(see, e.g., \cite[Theorems 4.17, 4.23, 4.24]{S2007}, respectively).

 \begin{property}\label{P3.3}
 Support and positivity:
 \beq\label{E3.3}
 B_N(t,\Om) \left\{\begin{array}{ll}
 =0, &t\in \R\setminus(\min \Om,\max \Om),\\
 >0, &t\in (\min \Om,\max \Om).\end{array}\right.
 \eeq
 \end{property}
 \begin{property}\label{P3.4}
 Normalization:
 \beq\label{E3.4}
 \int_{\min \Om}^{\max \Om} B_N(t,\Om)dt=1.
 \eeq
 \end{property}
\begin{property}\label{P3.5}
 Symmetry:
 \beq\label{E3.5}
 B_N(t,\Om)=(-1)^{N+1}(N+1)\sum_{v\in\Om}\frac{(t-v)^N_+}{W^\prime(v)},
 \eeq
 where $W$ is defined in \eqref{E3.1}.
 \end{property}
 It is easy to prove the next two properties of $B_N$ and $B_N^*$.

\begin{property}\label{P3.6}
First interval:
 \beq\label{E3.6}
 B_N(t,\Om)=\frac{(-1)^{N+1}(N+1)(t-\min \Om)^N}{W^\prime(\min \Om)},
 \qquad t\in[\min \Om,\min (\Om\setminus \{\min \Om\})].
 \eeq
 \end{property}
 \begin{proof}
 The property immediately follows from \eqref{E3.5}.
 \end{proof}
 \begin{property}\label{P3.8}
Support and positivity of $B_N^*$:
\beq\label{E3.9}
 B_N^*(t,\Om) \left\{\begin{array}{lll}
 =0, &t\in (0,\min \Om]\cup [\max \Om,\iy),\\
 =\frac{(-1)^{N+1}(N+1)}{W^\prime(\min \Om)}, &t=\min \Om=0,\\
 >0, &t\in (\min \Om,\max \Om).\end{array}\right.
 \eeq
 \end{property}
\begin{proof}
Relation \eqref{E3.9} immediately
follows from relations \eqref{E3.2}, \eqref{E3.3}, and \eqref{E3.6}.
\end{proof}
\noindent
Note that relation \eqref{E3.9} shows that $B_N^*$ has a jump
discontinuity at $\min \Om$ if $\min \Om=0$.

Finally, we prove an important extension of
the Curry--Schoenberg result \cite[p. 74]{CS1966}.
 \begin{property}\label{P3.7}
Polynomial interpolation:\\
If $f\in C ([\min \Om,\max \Om])\cap C^{(N+1)}
((\min \Om,\max \Om])$ and
\beq\label{E3.7}
\int_{\min \Om}^{\max \Om} \left\vert f^{(N+1)}(t)\right
\vert(t-\min \Om)^Ndt<\iy,
\eeq
then
 \beq\label{E3.8}
 f(u)-L_N(u,f(u),w(u))
 =\frac{w(u)}{(N+1)!}\int_{\min \Om}^{\max \Om}
 B_N(t,\Om)f^{(N+1)}(t)\,dt,
 \qquad u\in\Om,
 \eeq
 where $w(u):=\prod_{v\in\Om,v\ne u}(u-v)$.
\end{property}
\begin{proof}
Assume, without loss of generality, that $\min \Om=0$ and $\max \Om=1$.
Next, let $\tilde{f}$ be an extension  of $f$ to $[0,2]$ such
that $\tilde{f}\in C^{(N+1)}((0,2])$.

Then Property \ref{P3.7} is a consequence of the following property
of the weighted integral modulus of continuity:
\beq\label{E3.8a}
\lim_{\tau\to 0+}\int_0^1\left\vert F(t+\tau)-F(t)\right\vert\,t^N\,dt=0,
\eeq
where $F:=\tilde{f}^{(N+1)}$.
Indeed, if $f\in C^{(N+1)} ([0,1])$, then \eqref{E3.8} is
well known (see, e.g., \cite[p. 74]{CS1966} or \cite[Theorem 4.23]{S2007}).
Hence the following equality holds:
\beq\label{E3.8c}
 \tilde{f}(u+\tau)-L_N(u,\tilde{f}(u+\tau),w(u))
 =\frac{w(u+\tau)}{(N+1)!}\int_{0}^{1}
 B_N(t,\Om)F(t+\tau)\,dt,
 \qquad u\in\Om,
 \eeq
since $\tilde{f}(\cdot+\tau)\in C^{(N+1)}([0,1])$ for any $\tau\in (0,1]$.
Then setting $\tau\to 0+$ in \eqref{E3.8c}, we arrive at the left-hand side of \eqref{E3.8}
since $f\in C([0,1])$,
while the right-hand side of \eqref{E3.8} follows from
\eqref{E3.8c} and \eqref{E3.8a}
since $ B_N^*(t,\Om)$ is bounded on $[0,1]$
by Definition \ref{D3.2} and Property \ref{P3.8}.

To prove \eqref{E3.8a}, we note that for any $\tau$ and $\de$,
satisfying $0<\tau\le \de\le 1/2$, the following inequalities are valid:
\bna\label{E3.8b}
&&\int_0^1\left\vert F(t+\tau)-F(t)\right\vert\,t^N\,dt\nonumber\\
&&\le \int_0^\de\left\vert F(t)\right\vert\,t^N\,dt
+\int_0^\de\left\vert F(t+\tau)\right\vert\,t^N\,dt
+\int_\de^1\left\vert F(t+\tau)-F(t)\right\vert\,t^N\,dt\nonumber\\
&&\le 2\int_0^{2\de}\left\vert F(t)\right\vert\,t^N\,dt
+\int_\de^1\left\vert F(t+\tau)-F(t)\right\vert\,dt
:=I_1+I_2.
\ena
Furthermore, in view of condition \eqref{E3.7}, we see that given $\vep>0$,
there exists
$\de=\de(\vep)\in(0,1/2]$
such that $I_1<\vep/2$. Finally,  there exists
$\tau=\tau(\vep)\in(0,\de]$
such that $I_2<\vep/2$. Thus, $I_1+I_2< \vep$ and \eqref{E3.8a} follows from
\eqref{E3.8b}. This completes the proof of the property.
\end{proof}

Next, we use Properties \ref{P3.8} and \ref{P3.7} to obtain one more
representation for the polynomial interpolation difference.

\begin{corollary}\label{C3.9}
Let $\min \Om=0,\,u\in\Om$, and $w(u)=\prod_{v\in\Om,v\ne u}(u-v)$.
Then the following statements hold:\\
(a) If $\mathrm{Re}\, s>d$, and $s-d\ne 2,\,4,\ldots$, then
\beq\label{E3.10}
u^{\frac{s-d}{2}}-L_{N}\left(u,u^{\frac{s-d}{2}},w(u)\right)
=M_{N,s,d}\, w(u) \int_{0}^{\max \Om} B_N^*(t,\Om) t^{\frac{s-d}{2}-1}dt,
\eeq
where
\beq\label{E3.11}
M_{N,s,d}:=(-1)^{N+1}\frac{\G(N+1-(s-d)/2)}{\G((d-s)/2)(N+1)!}.
\eeq
(b) If $m\in\N$ and $m<N+1$,  then
\beq\label{E3.12}
(1/2)u^m\log u-L_N\left(u,(1/2)u^m\log u, w(u)\right)
=\Tilde{M}_{N,m}\, w(u)\int_{0}^{\max \Om} B_N^*(t,\Om) t^{m-1}dt,
\eeq
where
\beq\label{E3.13}
\Tilde{M}_{N,m}:=(1/2)(-1)^{m+N}\frac{m!(N-m)!}{(N+1)!}.
\eeq
\end{corollary}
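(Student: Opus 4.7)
The plan is to apply Property \ref{P3.7} directly to the two functions $f_a(u):=u^{(s-d)/2}$ and $f_b(u):=(1/2)u^m\log u$, compute their $(N+1)$-st derivatives, absorb a factor of $t^N$ into $B_N$ via the identity $B_N(t,\Om)=t^N B_N^*(t,\Om)$ from Definition \ref{D3.2}, and then verify that the scalar prefactors agree with $M_{N,s,d}$ and $\Tilde M_{N,m}$. The hypotheses of Property \ref{P3.7} are easy to check in both cases: $f_a$ is continuous on $[0,\max\Om]$ because $\mathrm{Re}(s-d)>0$ and is $C^{(N+1)}$ on the open interval, while \eqref{E3.7} reduces to $\int_0^{\max\Om} t^{\mathrm{Re}(s-d)/2-1}\,dt<\iy$; similarly $f_b$ extends continuously with $f_b(0)=0$, and the integrability check reduces to $\int_0^{\max\Om} t^{m-1}\,dt<\iy$, which holds since $m\ge 1$.

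For part (a), the power rule yields $f_a^{(N+1)}(t)=\frac{\G((s-d)/2+1)}{\G((s-d)/2-N)}\,t^{(s-d)/2-N-1}$. Inserting this into \eqref{E3.8} and rewriting $B_N(t,\Om)\,t^{(s-d)/2-N-1}=B_N^*(t,\Om)\,t^{(s-d)/2-1}$ produces the integral appearing on the right of \eqref{E3.10}. It then remains to recast the prefactor in the form \eqref{E3.11}; this is a direct application of the identity
\[
\frac{\G(\alpha+1)}{\G(\alpha-N)}=(-1)^{N+1}\,\frac{\G(N+1-\alpha)}{\G(-\alpha)},\qquad \alpha\notin\Z,
\]
obtained by using $\G(z)\G(1-z)=\pi/\sin(\pi z)$ twice together with $\sin(\pi(\alpha-N))=(-1)^N\sin(\pi\alpha)$. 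The hypothesis $s-d\notin\{2,4,\ldots\}$ is precisely what ensures the denominator $\G((d-s)/2)$ is finite in \eqref{E3.11}.

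For part (b), the only nontrivial step is computing $f_b^{(N+1)}$. I plan to avoid a messy Leibniz expansion by writing $t^m\log t=\partial_\alpha t^\alpha\big|_{\alpha=m}$ and differentiating in $\alpha$ the reflection-form expression
\[
(t^\alpha)^{(N+1)}=\frac{(-1)^N}{\pi}\,\G(\alpha+1)\sin(\pi\alpha)\,\G(N+1-\alpha)\,t^{\alpha-N-1},
\]
at $\alpha=m\le N$. Since $\sin(\pi m)=0$ and $\G(\alpha+1)$, $\G(N+1-\alpha)$ are regular at $\alpha=m$, only the term in which $\partial_\alpha$ hits $\sin(\pi\alpha)$ survives, yielding the factor $\pi\cos(\pi m)=\pi(-1)^m$ and thus
\[
(t^m\log t)^{(N+1)}=(-1)^{m+N}\,m!\,(N-m)!\,t^{m-N-1}.
\]
Substituting into \eqref{E3.8}, absorbing $t^N$ as in part (a), and carrying the prefactor $1/2$ through gives exactly the constant $\Tilde M_{N,m}$ in \eqref{E3.13}. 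The principal obstacle is this derivative computation: a naive Leibniz expansion produces an alternating binomial sum whose collapse to the above closed form is not obvious, so the $\alpha$-derivative trick — exploiting that $m$ is a simple zero of $\sin(\pi\alpha)$ — is the simplification that makes the identification of $\Tilde M_{N,m}$ painless.
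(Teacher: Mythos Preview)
Your proof is correct and, for part~(a), follows the same line as the paper: apply Property~\ref{P3.7} to $f(u)=u^{(s-d)/2}$, compute the $(N+1)$-st derivative as a Pochhammer product written via gamma functions, absorb $t^N$ into $B_N^*$, and reduce the prefactor to \eqref{E3.11} using the reflection formula. The paper simply asserts $f^{(N+1)}(t)=(N+1)!M_{N,s,d}\,t^{(s-d)/2-N-1}$ without spelling out the reflection step, but the argument is the same.

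Part~(b) is where your approach genuinely diverges. The paper computes $(t^m\log t)^{(N+1)}$ by Leibniz, obtaining the alternating sum $\sum_{k=0}^m(-1)^k\binom{m}{k}/(N+1-k)$, and then collapses it to $(-1)^m m!(N-m)!/(N+1)!$ by recognizing the partial fraction decomposition of $1/\prod_{k=0}^m(z-k)$ evaluated at $z=N+1$. Your route---writing $t^m\log t=\partial_\alpha t^\alpha|_{\alpha=m}$, expressing $(t^\alpha)^{(N+1)}$ in reflection form, and observing that only the $\partial_\alpha\sin(\pi\alpha)$ term survives at the zero $\alpha=m$---bypasses both the binomial sum and the partial-fraction identity entirely. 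It is cleaner and makes the closed form for $\Tilde M_{N,m}$ drop out immediately, at the small cost of needing to justify interchanging $\partial_\alpha$ with $\partial_t^{N+1}$ (which is routine for $t>0$). The paper's approach, by contrast, is more elementary in that it never leaves real analysis, and it exhibits the partial-fraction identity as the mechanism behind the collapse---useful if one wants the intermediate Leibniz expression for other purposes.
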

\begin{proof}
(a) Setting $f(u):=u^{\frac{s-d}{2}}$, we see that
\beq\label{E3.14}
f^{(N+1)}(t)=(N+1)!M_{N,s,d}\,t^{\frac{s-d}{2}-N-1},\qquad t>0,
\eeq
where $M_{N,s,d}$ is defined by \eqref{E3.11}.
Hence $f$ satisfies all the conditions
of Property \ref{P3.7} on $[0,\max\Om]$, and \eqref{E3.10}
 follows from
\eqref{E3.8}, \eqref{E3.14}, and Definition \ref{D3.2}.\\
(b)  Setting $f(u):=(1/2)u^m\log u$, we obtain from
the general Leibniz rule that
for $1\le m< N+1$,
\beq\label{E3.15}
f^{(N+1)}(t)=(-1)^N(1/2)(N+1)!t^{m-N-1}
\sum_{k=0}^m\frac{(-1)^k\binom{m}{k}}{N+1-k}
=(N+1)!\Tilde{M}_{N,m}t^{m-N-1},
\eeq
where $\Tilde{M}_{N,m}$ is defined by \eqref{E3.13} and
 the last equality in \eqref{E3.15} follows from the
partial fraction decomposition
\ba
\frac{1}{\prod_{k=0}^m(z-k)}=\frac{(-1)^m}{m!}
\sum_{k=0}^m\frac{(-1)^k\binom{m}{k}}{z-k}
\ea
for $z=N+1$. Hence $f$ satisfies all the conditions
of Property \ref{P3.7} on $[0,\max\Om]$, and \eqref{E3.12}
follows from
\eqref{E3.8}, \eqref{E3.15}, and Definition \ref{D3.2}.
\end{proof}

The following corollary is a combination of
Corollaries \ref{C2.4} and \ref{C3.9}.

\begin{corollary}\label{C3.10}
Let $x_k=x_{k,N,d},\,1\le  k\le N,$ be positive zeros of $P_{2N+d}$, where
$\left(P_{2N+d}\right)_{N=1}^\iy\in\PPP_d(\be,\g,\de)$.
 In addition, let $\left(u_N\right)_{N=1}^\iy$ be a sequence of positive numbers
 such that $u_N\notin \{x_1,\ldots,x_N\}$ and
 $\lim_{N\to\iy} \be_N u_N =\iy$.
 If $\mathrm{Re} \,s>d$, then for the set of knots
 $\Om_{N+2,d}:=\{0, u_N^2, x_{1,N,d}^2,\ldots, x_{N,N,d}^2\}$
  the following relation holds:
 \bna\label{E3.16}
 &&\G\left(1+\frac{s-d}{2}\right)
 \lim_{N\to\iy}\left[u_N^2 \,\be_N^{s-d}\,N^{-1-(s-d)/2}\,\prod_{k=1}^Nx_{k,N,d}^2\right.\nonumber\\
 &&\times\left.\int_0^{\max \Om_{N+2,d}}B_N^*\left(t,\Om_{N+2,d}\right)t^{(s-d)/2-1}dt\right]
 =\int_0^\iy\frac{t^{s/2-1}}{h_d(\sqrt{t})}dt.
 \ena
 The convergence in \eqref{E3.16} is uniform on any compact subset
  of the line $\mathrm{Re}\,s=r>d$.
\end{corollary}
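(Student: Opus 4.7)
The plan is to apply Corollaries \ref{C2.4}(a) and \ref{C3.9}(a) to the same Lagrange interpolant and equate the two resulting expressions for the interpolation error. First I would specialize Corollary \ref{C3.9}(a) to $\Om=\Om_{N+2,d}$ and $u=u_N^2\in\Om$. Since $\Om\setminus\{u_N^2\}=\{0,x_{1}^2,\ldots,x_N^2\}$, the associated weight is
\[
w(u_N^2)=\prod_{v\in\Om,\,v\ne u_N^2}(u_N^2-v)
=u_N^2\prod_{k=1}^N\bigl(u_N^2-x_k^2\bigr)=u_N^2\,Q_N(u_N^2),
\]
and the relevant Lagrange polynomial interpolates $u^{(s-d)/2}$ at the zeros of $uQ_N(u)$, i.e.\ exactly the nodes appearing in \eqref{E2.5}. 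Hence the left-hand side of \eqref{E3.10} coincides with $u_N^{s-d}-L_N(u_N^2,u^{(s-d)/2},uQ_N(u))$, and \eqref{E3.10} reads
\[
u_N^{s-d}-L_N\bigl(u_N^2,u^{(s-d)/2},uQ_N(u)\bigr)
=M_{N,s,d}\,u_N^2\,Q_N(u_N^2)\int_0^{\max\Om_{N+2,d}}B_N^*\bigl(t,\Om_{N+2,d}\bigr)t^{(s-d)/2-1}dt.
\]

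Multiplying this identity by $\beta_N^{s-d}Q_N(0)/Q_N(u_N^2)$ and invoking \eqref{E2.5}, the factor $Q_N(u_N^2)$ cancels and I obtain, as $N\to\iy$,
\[
M_{N,s,d}\,\beta_N^{s-d}\,u_N^2\,Q_N(0)\int_0^{\max\Om_{N+2,d}}B_N^*t^{(s-d)/2-1}dt
\;\longrightarrow\;\frac{2\sin((s-d)\pi/2)}{\pi}\int_0^\iy\frac{t^{s-1}}{h_d(t)}dt,
\]
with convergence uniform on any compact subset of $\{\mathrm{Re}\,s=r>d\}$ by the uniformity in Corollary \ref{C2.4}(a). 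Substituting $Q_N(0)=(-1)^N\prod_{k=1}^N x_k^2$ (from \eqref{E2.3}) and the explicit value \eqref{E3.11} of $M_{N,s,d}$, the signs $(-1)^N(-1)^{N+1}=-1$ combine and yield
\[
u_N^2\beta_N^{s-d}\prod_{k=1}^N x_k^2\int_0^{\max\Om_{N+2,d}}B_N^*t^{(s-d)/2-1}dt
\;=\;\frac{(N+1)!\,\G((d-s)/2)}{\G(N+1-(s-d)/2)}\cdot\frac{-2\sin((s-d)\pi/2)}{\pi}
\int_0^\iy\frac{t^{s-1}}{h_d(t)}dt\,(1+o(1)).
\]

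The next step is to simplify the $s$-dependent prefactor. Stirling's ratio asymptotic gives
\[
\frac{(N+1)!}{\G(N+1-(s-d)/2)}\;\sim\;N^{1+(s-d)/2},
\]
uniformly on compact subsets of $\{\mathrm{Re}\,s=r>d\}$, while Euler's reflection formula applied to $z=(d-s)/2$ (so $1-z=1+(s-d)/2$) yields
\[
\G((d-s)/2)\,\G(1+(s-d)/2)=\frac{\pi}{\sin(\pi(d-s)/2)}=\frac{-\pi}{\sin(\pi(s-d)/2)},
\]
so that $\G((d-s)/2)\cdot\frac{-2\sin((s-d)\pi/2)}{\pi}=\frac{2}{\G(1+(s-d)/2)}$. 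Inserting these two identities, the prefactor collapses to $2N^{1+(s-d)/2}/\G(1+(s-d)/2)$; transferring $\G(1+(s-d)/2)$ and $N^{-1-(s-d)/2}$ to the left gives a clean limit equal to $2\int_0^\iy t^{s-1}/h_d(t)\,dt$. A final change of variable $t=\sqrt{\tau}$, i.e.\ $\tau=t^2$, converts $2\int_0^\iy t^{s-1}/h_d(t)\,dt$ into $\int_0^\iy \tau^{s/2-1}/h_d(\sqrt{\tau})\,d\tau$, matching the right-hand side of \eqref{E3.16}.

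The routine bookkeeping consists of tracking $(-1)^N$ signs, and the only nontrivial step is the reflection-formula manipulation combined with Stirling; uniform convergence on compact subsets of $\mathrm{Re}\,s=r>d$ follows because Corollary \ref{C2.4}(a) provides uniformity on the full line, and both the Stirling ratio and all explicit $s$-factors (including $\G(1+(s-d)/2)$ and $\sin((s-d)\pi/2)$) are continuous and bounded away from $0$ and $\iy$ on compact subsets that avoid the excluded values $s-d=2,4,\ldots$. I expect the main obstacle to be confirming that these excluded values do not in fact cause a problem in the final statement, which they do not because the pole of $\G((d-s)/2)$ is exactly cancelled by the zero of $\sin((s-d)\pi/2)$ after using reflection, so both sides of \eqref{E3.16} extend continuously to $\mathrm{Re}\,s>d$.
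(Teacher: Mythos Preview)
Your argument for the generic case $s-d\ne 2,4,\ldots$ is correct and coincides with the paper's: combine Corollary \ref{C2.4}(a) with Corollary \ref{C3.9}(a) at $w(u)=uQ_N(u)$, $u=u_N^2$, then simplify via the reflection formula \eqref{E3.18} and the Stirling ratio \eqref{E3.19}. The sign bookkeeping and the substitution $t\mapsto\sqrt{\tau}$ are fine.

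The gap is at the excluded values $s-d=2m$. Your continuity claim only shows that both sides of \eqref{E3.16} \emph{make sense} there; it does not establish the limit. Concretely, the displayed identity you derive with the ``$(1+o(1))$'' is obtained by dividing
\[
M_{N,s,d}\,\beta_N^{s-d}u_N^2Q_N(0)\int_0^{\max\Om}B_N^*t^{(s-d)/2-1}dt
\;\longrightarrow\;\frac{2\sin((s-d)\pi/2)}{\pi}\int_0^\iy\frac{t^{s-1}}{h_d(t)}\,dt
\]
by $M_{N,s,d}$, but $M_{N,s,d}$ vanishes at $s-d=2m$ (the interpolation error for the polynomial $u^m$ is identically zero). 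Equivalently, you have proved $\sin\!\bigl(\tfrac{\pi(s-d)}{2}\bigr)A_N(s)\to\sin\!\bigl(\tfrac{\pi(s-d)}{2}\bigr)B(s)$ uniformly on the line, and this says nothing about $A_N(s_0)$ at a zero $s_0$ of the sine. The observation that the pole of $\G((d-s)/2)$ cancels the zero of $\sin$ explains why the \emph{target} formula is regular, not why the \emph{convergence} persists.

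The paper closes this gap differently: it invokes parts (b) of Corollaries \ref{C2.4} and \ref{C3.9}, which give the analogous asymptotic for the interpolation error of $u^m\log u$ and yield \eqref{E3.16} directly at $s-d=2m$. If you want to salvage the continuity route instead, you would need an extra uniform bound on $A_N$ near $s_0$ (e.g.\ via analyticity in a strip and a normal-families argument), which is more work than the paper's one-line appeal to the logarithmic case.
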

\begin{proof}
The proof is based on Corollaries \ref{C2.4} and \ref{C3.9}.
To prove \eqref{E3.16} for $\mbox{Re} \,s>d$ and $s-d\ne 2,\,4,\ldots$,
we first use statement (a) of Corollary \ref{C2.4} and
replace the interpolation difference in the left-hand side of \eqref{E2.5}
by the right-hand side of \eqref{E3.10} with $w(u)=u\,Q_N(u)$
and $u=u_N^2$
(recall that $Q_N$ is defined by \eqref{E2.3}).

Next, we simplify the limit expression by using  Euler's reflection formula
\beq\label{E3.18}
\frac{2}{\pi}\G\left(\frac{d-s}{2}\right)\sin \left(\frac{(s-d)\pi}{2}\right)
=-\frac{2}{\G\left(1+{(s-d)}/{2}\right)}
\eeq
and the limit relation (see, e.g., \cite[Eq. 1.18(5)]{ErdI1953})
\beq\label{E3.19}
\lim_{N\to\iy}\frac{\G(N+1-(s-d)/2)}{(N+1)!}\,N^{1+(s-d)/2}=1.
\eeq
Thus, we arrive at \eqref{E3.16} for $\mbox{Re} \,s>d$ and $s-d\ne 2,\,4,\ldots$.
The uniform convergence in \eqref{E3.16}
  follows from Corollary \ref{C2.4} since the
convergence in \eqref{E3.19} is uniform on any compact subset
  of the line $\mathrm{Re}\,s=r>d$ (see \cite[Eq. 1.18(4)]{ErdI1953}).

It turns out that \eqref{E3.16} holds for $s-d=2m,\,m\in\N$, as well.
Indeed, using statement (b) of Corollary \ref{C2.4} and then
replacing the interpolation difference in the left-hand side of \eqref{E2.6}
by the right-hand side of \eqref{E3.12} with $w(u)=u\,Q_N(u)$ and $u=u_N^2$,
we arrive at \eqref{E3.16} for $s-d=2m,\,m\in\N$.
\end{proof}
\begin{remark}\label{R4.11}
The normalization property for $B_N(\cdot,\Om)$ is given by \eqref{E3.4}.
Setting $s=2+d$ in \eqref{E3.16}, we obtain the
following limit version of
the normalization property for $B_N^*\left(\cdot,\Om_{N+2,d}\right)$:
\ba
\lim_{N\to\iy}u_N^2 \,\be_N^{2}\,N^{-2}\,\prod_{k=1}^Nx_{k,d}^2\,
 \int_0^{\max \Om_{N+2,d}}B_N^*\left(t,\Om_{N+2,d}\right)dt
 =\left\{\begin{array}{ll}
 4\be(2),&d=0,\\
 14\z(3),&d=1. \end{array}\right.
\ea
Moreover, setting $s=2m+2+d$ in \eqref{E3.16},
we obtain asymptotic formulae for the moments of
$B_N^*\left(\cdot,\Om_{N+2,d}\right)$.
Note that combinatorial representations for the moments of $B$-splines were obtained
 by Neuman  \cite[Sect. 3]{N1981}.
\end{remark}

\section{Theta-like Functions and their Properties}\label{S4}
\setcounter{equation}{0}
Let us define two theta-like functions,
\beq\label{E4.1}
\varTheta_d(t):=\left\{\begin{array}{ll}
1-\frac{4}{\pi}
\sum_{k=0}^\iy(-1)^k\frac{\exp\left(-(\pi^2/4)(2k+1)^2t^{-1}\right)}{2k+1}, &d=0,\\
1+2\sum_{k=1}^\iy(-1)^k\,\exp\left(-(\pi k)^2\,t^{-1}\right), &d=1,
\end{array}\right. \qquad t>0,
\eeq
where $\varTheta_1$ coincides with $\vartheta_4\left(0,i\pi t^{-1}\right)$,
while $\varTheta_0$ is related to derivatives and integrals of $\vartheta_1\left(0,i\pi t^{-1}\right)$
(see \cite[p. 548]{B1998} and \eqref{E4.5}). Note that the theta functions
$\vartheta_4$ and $\vartheta_1$ are given in the notation of Rademacher \cite[p. 166]{R1973}.

Certain properties of $\varTheta_d$ are discussed below.

\begin{lemma}\label{L4.1}
(a) $\varTheta_d$ is continuous on $(0,\iy)$
and $\sup_{t\in (0,\iy)}\left\vert \varTheta_d(t)\right\vert<\iy$.\\
(b) The following equality holds:
\beq\label{E4.3}
H_d(t):=t\int_0^\iy \varTheta_d(1/z)e^{-tz}dz
=\frac{t^{d/2}}{h_d\left(\sqrt{t}\right)}, \qquad t>0,
\eeq
where $h_d$ is defined by \eqref{E1.1}.\\
(c) The following asymptotics hold:
\beq\label{E4.2}
\varTheta_d(t)=\left\{\begin{array}{ll}
4(\pi t)^{-1/2}e^{-t/4}\left(1+O\left(t^{-1}\right)\right), &d=0,\\
2(t/\pi)^{1/2}e^{-t/4}\left(1+O\left(e^{-2t}\right)\right), &d=1,
\end{array}\right. \qquad t\to\iy.
\eeq
\end{lemma}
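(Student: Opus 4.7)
The plan is to handle the three parts in sequence, using alternating-series estimates for (a), term-by-term Laplace integration combined with partial-fraction expansions for (b), and Jacobi-type modular transformations for (c).

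For (a), the defining series in \eqref{E4.1} converges uniformly on each compact subset of $(0,\iy)$ by the Weierstrass $M$-test (the exponentials decay faster than any polynomial in $k$), so $\varTheta_d\in C((0,\iy))$. For global boundedness I appeal to the alternating series estimate: both $e^{-(\pi^2/4)(2k+1)^2/t}/(2k+1)$ ($d=0$) and $e^{-(\pi k)^2/t}$ ($d=1$) are decreasing in $k$ for every fixed $t>0$, so each series is bounded in absolute value by its first term, giving $\le e^{-\pi^2/(4t)}\le 1$ and $\le e^{-\pi^2/t}\le 1$ respectively, hence $\sup_{t>0}|\varTheta_d(t)|<\iy$.

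For (b), I substitute the series \eqref{E4.1} for $\varTheta_d(1/z)$ into the Laplace integral and interchange summation with integration (justified by Tonelli, since $\int_0^\iy e^{-(c_k+t)z}\,dz=(c_k+t)^{-1}$ and the resulting coefficient series is absolutely summable). Term-by-term integration produces
\ba
\int_0^\iy\varTheta_1(1/z)e^{-tz}\,dz &=& \frac{1}{t}+2\sum_{k=1}^\iy\frac{(-1)^k}{(\pi k)^2+t},\\
\int_0^\iy\varTheta_0(1/z)e^{-tz}\,dz &=& \frac{1}{t}-\frac{4}{\pi}\sum_{k=0}^\iy\frac{(-1)^k}{(2k+1)\left((\pi^2/4)(2k+1)^2+t\right)}.
\ea
The first expression is the Mittag-Leffler partial-fraction expansion of $(\sqrt{t}\sinh\sqrt{t})^{-1}$, obtained from $\pi/\sin(\pi w)$ at $w=i\sqrt{t}/\pi$. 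For the second, splitting $1/[(2k+1)((\pi^2/4)(2k+1)^2+t)]$ by partial fractions in $(2k+1)$, using Leibniz's series $\sum_{k\ge 0}(-1)^k/(2k+1)=\pi/4$ to cancel the $1/t$ contribution, and recognizing the remaining sum via the Mittag-Leffler expansion of $\sec(\pi w/2)$ at $w=i\sqrt{t}/\pi$ yields $1/(t\cosh\sqrt{t})$. Multiplying both identities by $t$ then gives \eqref{E4.3}.

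For (c) with $d=1$, writing $\varTheta_1(t)=\sum_{k\in\Z}(-1)^k e^{-(\pi k)^2/t}$ and applying Poisson summation (equivalently Jacobi's imaginary transformation of $\vartheta_4$) with the Gaussian Fourier transform yields $\varTheta_1(t)=2(t/\pi)^{1/2}\sum_{m\ge 0}e^{-t(m+1/2)^2}$; retaining the $m=0$ term produces $2(t/\pi)^{1/2}e^{-t/4}$ with $O(e^{-2t})$ relative remainder. The $d=0$ case is the main obstacle, because $\varTheta_0$ is not a theta value directly but an integral of a theta derivative (on account of the factor $1/(2k+1)$). The plan is to differentiate $\phi(t):=\sum_{k\ge 0}(-1)^k e^{-(\pi^2/4)(2k+1)^2/t}/(2k+1)$, recognize $\phi'(t)$ as $(\pi/(8t^2))\vartheta_1'(0,i\pi/t)$, apply Jacobi's transformation $\vartheta_1'(0,\tau)=(-i\tau)^{-3/2}\vartheta_1'(0,-1/\tau)$ with $\tau=i\pi/t$ to obtain the rapidly convergent representation $\phi'(t)=(\sqrt{\pi}/(4\sqrt{t}))\sum_{k\ge 0}(-1)^k(2k+1)e^{-t(k+1/2)^2}$, and then integrate from $t$ to $\iy$ using $\phi(\iy)=\pi/4$ (Leibniz once more), obtaining
\ba
\varTheta_0(t) &=& \frac{1}{\sqrt{\pi}}\int_t^\iy s^{-1/2}\sum_{k=0}^\iy(-1)^k(2k+1)e^{-s(k+1/2)^2}\,ds.
\ea
The $k=0$ term evaluates via the substitution $u=s/4$ to $2\,\mathrm{erfc}(\sqrt{t}/2)$, whose standard asymptotic $\mathrm{erfc}(x)=e^{-x^2}(x\sqrt{\pi})^{-1}(1+O(x^{-2}))$ gives $4(\pi t)^{-1/2}e^{-t/4}(1+O(t^{-1}))$, while the $k\ge 1$ tail contributes only $O(e^{-9t/4})$; together this yields the stated asymptotic.
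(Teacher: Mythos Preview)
Your proof is correct and follows essentially the same route as the paper: direct series estimates for (a), term-by-term Laplace integration combined with partial-fraction/Mittag--Leffler expansions of $1/\cosh$ and $1/\sinh$ for (b), and a Jacobi modular transformation of the $\vartheta_1'$-type sum followed by the incomplete-gamma (equivalently $\mathrm{erfc}$) asymptotic for (c). The only cosmetic differences are that in (c) for $d=0$ the paper integrates in the variable $x=1/t$ over $(0,1/t)$ rather than in $s$ over $(t,\infty)$ (the two are related by $x=1/s$), and that your intermediate constant $\pi/(8t^2)$ should read $\pi^2/(8t^2)$---your subsequent formula for $\phi'(t)$ is nevertheless correct.
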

\begin{proof}
(a) The statement immediately follows from \eqref{E4.1}. \\
(b) We note that the Laplace transformation $H_d(t)/t$ exists for $t>0$ by statement (a).
Then integrating the corresponding exponential functions from \eqref{E4.3} and \eqref{E4.1}
and using the partial fraction decomposition of hyperbolic functions, we have for $t>0$,
\ba
&&H_0(t)=1-\frac{4}{\pi}\sum_{k=0}^\iy(-1)^k \frac{1}{(2k+1)\left(\pi^2(k+1/2)^2+t\right)}
=\frac{4}{\pi}\sum_{k=0}^\iy(-1)^k \frac{2k+1}{\pi^2(k+1/2)^2+t}
=\frac{1}{\cosh \sqrt{t}};\\
&&H_1(t)=1+2t\sum_{k=0}^\iy(-1)^k \frac{1}{(\pi k)^2+t}=\frac{\sqrt{t}}{\sinh \sqrt{t}}.
\ea
Hence \eqref{E4.3} is valid.\\
(c) The following
identity holds by properties of theta functions:
\ba
\varTheta_1(t)=2(t/\pi)^{1/2}e^{-t/4}\left(1+\sum_{k=1}^\iy e^{-k(k+1)t}\right),
\qquad t>0,
\ea
see \cite[p. 177]{R1973} or \cite[p. 548]{B1998}. Hence \eqref{E4.2}
is valid for $d=1$.

To prove \eqref{E4.2} for $d=0$, we first need the following relations:
\bna\label{E4.4}
\varTheta_0^*(x)
&:=&\sum_{k=0}^\iy(-1)^k (2k+1)\exp\left(-(\pi^2/4)(2k+1)^2x\right)\nonumber\\
&=&(\pi x)^{-3/2}e^{-1/(4x)}\left(1+\sum_{k=1}^\iy (2k+1)e^{-k(k+1)x^{-1}}\right)\nonumber\\
&=&(\pi x)^{-3/2}e^{-1/(4x)}\left(1+O\left(e^{-2x^{-1}}\right)\right),
\qquad x\to 0+,
\ena
see \cite[pp. 548--549]{B1998}. Next, it follows from \eqref{E4.1} and \eqref{E4.4} that
\bna\label{E4.5}
\varTheta_0(t)
&=&\pi\int_0^{1/t}\varTheta_0^*(x)dx\nonumber\\
&=&\pi^{-1/2}\int_0^{1/t}x^{-3/2}e^{-1/(4x)}\left(1+O\left(e^{-2/x}\right)\right)dx\nonumber\\
&=&2\pi^{-1/2}\G(1/2,t/4)\left(1+O\left(e^{-2t}\right)\right),\qquad t\to \iy.
\ena
Finally, using  the asymptotic behavior of $\G(1/2,\tau)$ as $\tau\to \iy$
(see \cite[Sect. 9.2]{ErdII1953}), we obtain \eqref{E4.2} for $d=0$ from \eqref{E4.5}.
\end{proof}

\section{The Mellin Transform}\label{S6}
\setcounter{equation}{0}
We first need certain known facts
 about the \textit{Mellin transform} defined by the formula
\ba
M(F,s):=\int_0^\iy F(t)\,t^{s-1}dt,\qquad \mbox{Re}\,s>0,\quad t^{\mathrm{Re}\,s-1} F(t)\in L_1([0,\iy)).
\ea
 In particular, $M(F,\cdot)$ exists for $F\in\LL$.

 In addition,
 the \textit{Mellin convolution}
 of two functions $F\in\LL$ and $G\in\LL$ is defined by the integrals
\beq\label{E1.3}
H(t):=\int_0^\iy F(\tau)G\left(\frac{t}{\tau}\right)\,\frac{d\tau}{\tau}
=\int_0^\iy G(\tau)F\left(\frac{t}{\tau}\right)\,\frac{d\tau}{\tau}.
\eeq

\begin{proposition}\label{P1.1a} (see, e.g., \cite[Sect. 4, Theorem 3]{BJ1997})
The Mellin convolution $H$ defined by \eqref{E1.3}
belongs to $\LL$. Moreover, the following relation holds:
\beq\label{E1.4}
M(H,s)=M(F,s)M(G,s).
\eeq
\end{proposition}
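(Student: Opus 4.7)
The plan is to establish $H \in \LL$ via pointwise bounds coming from the rapid decay of $F$ and $G$, and then to derive the product formula by Fubini's theorem combined with the substitution $u = t/\tau$.

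For the first step, since $F, G \in C([0,\iy))$ have rapid decay at infinity, for every $\mu \ge 0$ there is a constant $C_\mu$ with
\[
|F(\tau)| \le C_\mu (1+\tau)^{-\mu}, \qquad |G(\tau)| \le C_\mu (1+\tau)^{-\mu}, \qquad \tau \ge 0.
\]
Inserting these bounds into the defining integral for $H$ and splitting at $\tau = \sqrt{t}$, I would use $|F(\tau)| \lesssim_\mu \tau^{-\mu}$ on $[\sqrt{t},\iy)$ together with $|G(t/\tau)| \lesssim_\mu (t/\tau)^{-\mu}$ on $(0,\sqrt{t}\,]$ to obtain $|H(t)| \lesssim_\mu t^{-\mu/2}$ for arbitrary $\mu$, which gives rapid decay of $H$ at infinity. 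Continuity of $H$ on $(0,\iy)$ then follows from dominated convergence, with an integrable majorant furnished by the same bounds uniformly on compact subintervals.

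For the product formula, with $\sigma := \mathrm{Re}\,s > 0$, I would begin from
\[
M(H,s) = \int_0^\iy t^{s-1} \int_0^\iy F(\tau)\, G(t/\tau) \,\frac{d\tau}{\tau}\, dt,
\]
and verify absolute convergence of the iterated integral by applying the substitution $u = t/\tau$ in the inner variable, which decouples the expression into
\[
\int_0^\iy |F(\tau)|\, \tau^{\sigma-1}\, d\tau \cdot \int_0^\iy |G(u)|\, u^{\sigma-1}\, du,
\]
finite because $\tau^{\sigma-1}|F(\tau)|$ and $u^{\sigma-1}|G(u)|$ lie in $L_1([0,\iy))$ for $F,G\in\LL$, as noted after the definition of $\LL$. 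Fubini then permits swapping the order of integration, and repeating the same substitution without absolute values yields the factorization $M(H,s)=M(F,s)\,M(G,s)$.

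The main technical delicacy is the behavior near $t = 0$: the tail bounds alone supply an integrable majorant only on compact subintervals of $(0,\iy)$, so extending $H$ continuously all the way to the boundary requires combining those tail estimates with the continuity of $F$ and $G$ at $0$ (and the symmetric alternative form of the convolution in \eqref{E1.3}, which trades the roles of $F$ and $G$). Once this boundary issue is handled, the Fubini--Tonelli argument above is routine and the product formula \eqref{E1.4} follows immediately.
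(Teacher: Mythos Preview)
The paper does not supply its own proof here; the proposition is simply quoted from \cite{BJ1997}. Your Fubini--substitution argument for the product formula \eqref{E1.4} is the standard one and is fine, and your splitting argument for the rapid decay of $H$ at infinity is correct as well.

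The gap is at $t=0$. You suggest that this ``boundary issue'' can be resolved using only the continuity of $F$ and $G$ at $0$ together with the symmetric form of the convolution, but in fact the assertion $H\in\LL$ is not true at that level of generality. Take $F(\tau)=G(\tau)=e^{-\tau}\in\LL$; then
\[
H(t)=\int_0^\infty e^{-\tau-t/\tau}\,\frac{d\tau}{\tau}=2K_0\bigl(2\sqrt{t}\,\bigr)\sim -\log t\qquad(t\to 0^+),
\]
so $H\notin C([0,\infty))$ and hence $H\notin\LL$. The obstruction is that when $F(0)\ne 0$ and $G(0)\ne 0$ there is no integrable majorant for $F(\tau)G(t/\tau)/\tau$ uniformly as $t\to 0^+$, and passing to the symmetric form in \eqref{E1.3} merely trades the divergence near $\tau=0$ for one near $\tau=\infty$. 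In the paper's only use of this proposition (the proof of Theorem~\ref{T5.1}) one takes $G(\tau)=\tau e^{-\tau}$, so that $G(\tau)/\tau=e^{-\tau}\in L_1([0,\infty))$ and the difficulty disappears; under a hypothesis of this kind your outline for continuity at $0$ does go through.
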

\begin{proposition}\label{P1.1b} (see, e.g., \cite[Sect. 6, Theorem 6]{BJ1997})
If $F\in\LL$ and $M(F,r+i\cdot)\in L_1(\R)$ for some $r\in\R$, then
\ba
F(t)=\frac{1}{2\pi}\int_{-\iy}^\iy M(F,r+i\tau)\,t^{-r-i\tau}d\tau,
\qquad t>0.
\ea
\end{proposition}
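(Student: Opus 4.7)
The plan is to reduce the Mellin inversion formula to the classical Fourier inversion theorem via the exponential change of variable $u = \log t$. Define $g(u) := e^{ru} F(e^u)$ for $u \in \R$. Since $F \in \LL$ is continuous on $[0,\iy)$ and rapidly decreasing at $\iy$, $g$ is continuous on $\R$ and decays faster than any polynomial as $u \to +\iy$; as $u \to -\iy$, the factor $e^{ru}$ provides exponential decay, noting that the very existence of $M(F,r+i\cdot)$ for $F \in \LL$ requires $r > 0$, since otherwise $t^{r-1} F(t)$ need not be integrable near $t = 0$. It follows that $g \in L_1(\R)$.

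Next I would compute the Fourier transform of $g$. The substitution $t = e^u$ in the Mellin integral yields
\begin{equation*}
M(F, r + i\tau) \;=\; \int_0^\iy F(t)\, t^{r + i\tau - 1}\, dt \;=\; \int_{-\iy}^\iy g(u)\, e^{i\tau u}\, du,
\end{equation*}
so, up to the reflection $\tau \mapsto -\tau$, the function $\tau \mapsto M(F, r+i\tau)$ is precisely the Fourier transform of $g$. By hypothesis this function lies in $L_1(\R)$, so both $g$ and its Fourier transform are integrable.

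I would then invoke the classical Fourier inversion theorem: when $g \in L_1(\R)$ and $\hat g \in L_1(\R)$, the inversion formula holds almost everywhere, and in fact at every point of continuity of $g$. Since $g$ is continuous on all of $\R$, equality is pointwise everywhere. Translating this identity back in terms of $M(F, r+i\tau)$ and substituting $t = e^u$ gives
\begin{equation*}
e^{ru} F(e^u) \;=\; \frac{1}{2\pi}\int_{-\iy}^\iy M(F, r + i\tau)\, e^{-i\tau u}\, d\tau,
\end{equation*}
which, after dividing by $e^{ru}$ and returning to the variable $t$, is exactly the claimed representation.

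The main technical point, though modest in this classical setting, is the verification of the Fourier-inversion hypotheses: showing that $g \in L_1(\R)$ (which is where the implicit constraint $r > 0$ enters, imposed already by the finiteness of $M(F, r + i\tau)$ for $F \in \LL$) and promoting the a.e. equality provided by Fourier inversion to a genuine pointwise identity, which relies on the continuity of $F$ together with the fact that the inverse Fourier transform of an $L_1$ function is itself continuous.
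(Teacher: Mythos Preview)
The paper does not supply its own proof of this proposition; it is quoted as a known result from \cite[Sect.~6, Theorem~6]{BJ1997}. Your argument---reducing Mellin inversion to classical Fourier inversion via the substitution $t=e^u$ and $g(u)=e^{ru}F(e^u)$---is the standard proof and is correct. Your observation that the hypothesis $F\in\LL$ forces $r>0$ (so that $t^{r-1}F(t)$ is integrable near the origin) is a useful clarification of the statement as written.
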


Next, we define a condition in terms of the Mellin transform
 that is used in Theorem \ref{T5.2}.

\begin{definition}\label{D6.1}
Let $\left(u_N\right)_{N=1}^\iy$ be a sequence of positive numbers,
and let $\be=\left(\be_N\right)_{N=1}^\iy$ be an increasing sequence of positive numbers
with $\lim_{N\to\iy}\be_N=\iy$.
In addition, let $\left(G_{2N}\right)_{N=1}^\iy$ be a sequence of even
polynomials from $\PP_{2N}$,
having $2N$ real zeros
of multiplicity $1,\,G_{2N}(0)\ne 0$. We say that $\left(G_{2N}\right)_{N=1}^\iy$ satisfies the
\textit{$(r,\be)$-Condition} for a given $r> 0$ if there exist constants $N_0=N_0(r)$
and $\mu=\mu(r)$ such that the following inequality holds:

\beq\label{E6.1}
\sup_{N\in\N,\,N\ge N_0} \be_N^{r}
\left\vert\bigintss_0^\iy\frac{t^{s-1}}{\left(1+\left(t/u_N\right)^2\right)G_{2N}(i t)}dt\right\vert
\lesssim_r \left\vert\mathrm{Im}\,s\right\vert^\mu e^{-(\pi/2)\left\vert\mathrm{Im}\,s\right\vert},
\qquad\mathrm{Re}\,s=r.
\eeq
\end{definition}

The following estimates of the Fourier and Mellin transforms of  analytic functions
are helpful  for the verification  of the $(r,\be)$-Condition for certain polynomials.
The Fourier transform of a function $f\in L_1(\R)$
   is denoted by the formula
   \ba
   \widehat{f}(v):=
   \int_{\R}f(x)e^{i v x}dx,\qquad v\in\R.
   \ea

\begin{lemma}\label{L6.2}
Let a complex-valued function $f$ satisfy the following conditions:
\begin{itemize}
\item[(i)] $f$ is analytic  on the strip $S_\de:=\{x+iy: x\in\R, \,\vert y\vert< \de\},\,\de>0$;
\item[(ii)] $f$ is real-valued on $\R$;
 \item[(iii)]
 $\sup_{\xi\in S_\de} \vert \mathrm{Re}\,f(\xi) \vert<\iy$;
 \item[(iv)]
 $\sup_{\vert y\vert< \de}\int_{\R} \vert \mathrm{Re}\,f(x+iy) \vert\,dx \le K(\de)<\iy$.
\end{itemize}
Then $f\in L_1(\R)$ and
\beq\label{E6.2}
\left\vert\widehat{f}(v)\right\vert \le 2 K(\de) e^{-\de \vert v\vert},\qquad v\in\R.
\eeq
\end{lemma}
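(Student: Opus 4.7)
The strategy is to exploit the harmonic structure of $u(x,y) := \RE f(x+iy)$ on $S_\de$. A direct contour shift would require $L_1$-control of $f$ on horizontal lines off the real axis, which conditions (iii)--(iv) do not supply. Instead, I would transfer the problem to the harmonic function $u$, for which the hypothesis (iv) provides exactly the $L_1$ bound that is needed.

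First, $f\in L_1(\R)$ follows at once: by (ii), $f(x)=\RE f(x)$ on $\R$, so setting $y=0$ in (iv) gives $\|f\|_{L_1(\R)}\le K(\de)$. Next, the Schwarz reflection principle (from (i) and (ii)) yields $f(\bar z)=\overline{f(z)}$ throughout $S_\de$, hence $u(x,-y)=u(x,y)$; in particular, $u$ is harmonic on $S_\de$, even in $y$, with $u(\cdot,y)\in L_1(\R)$ uniformly for $|y|<\de$. For fixed $|y|<\de$, set $\widehat{u_y}(v) := \int_\R u(x,y) e^{ivx}\,dx$. Harmonicity $\partial_y^2 u=-\partial_x^2 u$, two integrations by parts in $x$ (whose boundary terms vanish because harmonic regularity together with (iii)--(iv) forces $u_y$ and $\partial_x u_y$ to decay as $|x|\to\iy$), and differentiation under the integral (legitimate by the uniform majorant (iii)) together imply the ODE $\partial_y^2\widehat{u_y}(v)=v^2\widehat{u_y}(v)$. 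Its general solution is $\widehat{u_y}(v)=A(v)e^{|v|y}+B(v)e^{-|v|y}$; evenness of $u$ in $y$ forces $A(v)=B(v)$, and the initial value $\widehat{u_0}(v)=\widehat{f}(v)$ (from (ii)) yields the key identity
\ba
 \widehat{u_y}(v) \;=\; \widehat{f}(v)\,\cosh(|v|y),\qquad |y|<\de.
\ea

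The conclusion \eqref{E6.2} is now immediate: the trivial bound $|\widehat{u_y}(v)|\le\|u_y\|_{L_1(\R)}\le K(\de)$ combined with $\cosh t\ge e^t/2$ gives $|\widehat{f}(v)|\le 2K(\de)e^{-|v||y|}$ for every $|y|<\de$, and letting $|y|\to\de^-$ finishes the proof. The main technical obstacle is justifying the Fourier-side ODE, i.e.\ interchanging $\partial_y^2$ with the $x$-integration and verifying the vanishing of the boundary terms in the two $x$-integrations by parts; this can be handled through standard interior estimates for harmonic functions together with (iii)--(iv) (bounded plus $L_1$ plus uniform continuity forces decay at $\pm\iy$), or, alternatively, by invoking the Poisson-kernel representation of $u$ on the strip, which realizes the identity above directly from the boundary Fourier data at $y=\pm\de'$, $\de'\uparrow\de$.
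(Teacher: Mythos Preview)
Your approach is correct and genuinely different from the paper's. The paper does not derive the Fourier-side ODE; instead it quotes a Poisson-type representation from the literature (Akhiezer, Timan): under (i)--(iii) the boundary limit $g(x):=\lim_{y\to\pm\de}\RE f(x+iy)$ exists a.e.\ and
\[
 f(x)=\frac{1}{2\de}\int_{\R}\frac{g(t)}{\cosh\frac{\pi(x-t)}{2\de}}\,dt,
\]
so that $\widehat{f}(v)=\widehat{g}(v)/\cosh(\de v)$; then (iv) plus Fatou give $\|g\|_{L_1}\le K(\de)$, and \eqref{E6.2} follows in one line. Your identity $\widehat{u_y}(v)=\widehat{f}(v)\cosh(vy)$ is the same relation read at height $y<\de$ rather than at the boundary, and your fallback suggestion (``Poisson-kernel representation on the strip'') is exactly the paper's route. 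What your ODE derivation buys is self-containment---no black-box citation---at the cost of the interchange/boundary-term verifications you flag. One small correction there: ``legitimate by the uniform majorant (iii)'' is not the right reason for differentiating $\int u(x,y)e^{ivx}\,dx$ in $y$, since (iii) is an $L_\infty$ bound and you need an $L_1$ dominator for $\partial_y u(\cdot,y)$; the interior mean-value estimate $|\nabla u(x_0,y_0)|\lesssim r^{-3}\int_{B_r(x_0,y_0)}|u|$ combined with (iv) and Fubini gives $\|\partial_y u(\cdot,y)\|_{L_1}\lesssim_r K(\de)$ uniformly on compact sub-strips, which is what you actually need (and what your later remark about ``interior estimates together with (iii)--(iv)'' correctly points to).
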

\begin{proof}
If conditions (i), (ii), and (iii) are satisfied, then it is known
(see \cite[Sect. 110]{A1965} or \cite[Sect. 3.8.5]{T1963}) that for almost all $x\in \R$
there exists the limit
\beq\label{E6.3}
\lim_{y\to \pm\de} \mathrm{Re}\,f(x+iy)=g(x),\qquad \sup_{x\in\R}\vert g(x)\vert<\iy,
\eeq
and the following representation holds:
\beq\label{E6.4}
f(x)=\frac{1}{2\de}\bigintss_{\R} \frac{g(t)}{\cosh\frac{\pi(x-t)}{2\de}}dt.
\eeq
In addition to \eqref{E6.3}, condition (iv) implies that $g\in L_1(\R)$
since by \eqref{E6.3}, Fatou's Lemma, and condition (iv),
\beq\label{E6.5}
\int_\R\vert g(x)\vert dx
\le \liminf_{y\to \pm\de} \int_\R\vert \mathrm{Re}\,f(x+iy)\vert dx
\le K(\de).
\eeq
Then \eqref{E6.2} follows from \eqref{E6.4} and \eqref{E6.5} since
\ba
\left\vert\widehat{f}(v)\right\vert
=\left\vert\widehat{g}(v)\right\vert/\cosh \de v
\le 2 K(\de) e^{-\de \vert v\vert}.
\ea
\end{proof}
\begin{lemma}\label{L6.3}
Let $r>0$ be a fixed number, and
let a complex-valued function $F$ satisfy the following conditions:
\begin{itemize}
\item[(i)] $F$ is analytic  on the region
$A_\de:=\{w\in\CC: \,\vert \arg w\vert< \de\},\,0<\de< \pi$;
\item[(ii)] $F$ is real-valued on $\R$;
 \item[(iii)]
 $\sup_{w\in A_\de} \left\vert \mathrm{Re}\, \left(w^r F(w)\right)\right\vert<\iy$;
 \item[(iv)]
 $\sup_{\vert \arg w\vert< \de}
 \int_{0}^\iy R^{r-1}\left\vert \mathrm{Re}\,
 \left(e^{ir\arg w}F\left(R\, e^{i\arg w}\right)\right) \right\vert\,dR \le K_1(\de,r)<\iy$.
\end{itemize}
Then
\ba
\left\vert M(F,r+iv)\right\vert \le 2 K_1(\de,r) e^{-\de \vert v\vert},\qquad v\in\R.
\ea
\end{lemma}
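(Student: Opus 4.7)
The plan is to reduce Lemma \ref{L6.3} to Lemma \ref{L6.2} via the exponential change of variables $w=e^t$, which is the standard bridge between the Mellin transform on $A_\de$ and the Fourier transform on the strip $S_\de$. Under this substitution, a point $t=x+iy\in S_\de$ maps to $w=e^x e^{iy}\in A_\de$ with $\vert w\vert = e^x=:R$ and $\arg w=y$, so the strip $\{\vert y\vert<\de\}$ is mapped biholomorphically onto the wedge $\{\vert\arg w\vert<\de\}$.

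First, I would set $f(t):=e^{rt}F(e^t)$ and verify that $f$ satisfies all four hypotheses of Lemma \ref{L6.2}. Analyticity on $S_\de$ is inherited from condition (i) via the composition with the entire function $e^t$. Real-valuedness on $\R$ follows from condition (ii) since $e^{rt}$ is real for $t\in\R$. For condition (iii) of Lemma \ref{L6.2}, observe that
\ba
\mathrm{Re}\,f(x+iy)=\mathrm{Re}\bigl(e^{r(x+iy)}F(e^{x+iy})\bigr)
=\mathrm{Re}\bigl(w^r F(w)\bigr)\Big|_{w=e^{x+iy}},
\ea
so the boundedness in condition (iii) of Lemma \ref{L6.3} on $A_\de$ translates directly. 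For condition (iv), I would perform the change of variable $R=e^x$, $dR=R\,dx$ in the integral from Lemma \ref{L6.3}(iv), giving
\ba
\int_0^\iy R^{r-1}\left\vert\mathrm{Re}\bigl(e^{ir\arg w}F(Re^{i\arg w})\bigr)\right\vert dR
=\int_{-\iy}^\iy \left\vert\mathrm{Re}\bigl(e^{r(x+iy)}F(e^{x+iy})\bigr)\right\vert dx
=\int_{-\iy}^\iy \vert\mathrm{Re}\,f(x+iy)\vert\,dx,
\ea
so taking the supremum over $\vert y\vert<\de$ shows that Lemma \ref{L6.2}(iv) holds with the same constant $K_1(\de,r)$.

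Next, I would identify the Mellin transform with the Fourier transform of $f$. The same substitution $w=e^t$ yields
\ba
M(F,r+iv)=\int_0^\iy F(w)\,w^{r+iv-1}dw
=\int_{-\iy}^\iy e^{rt}F(e^t)\,e^{ivt}\,dt=\widehat{f}(v).
\ea
Applying Lemma \ref{L6.2} to $f$ then gives $\vert M(F,r+iv)\vert=\vert\widehat{f}(v)\vert\le 2K_1(\de,r)e^{-\de\vert v\vert}$, which is the desired bound.

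There is no real obstacle here; the only point requiring a bit of care is verifying that the growth/integrability hypotheses (iii)--(iv) of Lemma \ref{L6.3}, which are stated in the radial/angular coordinates on the wedge, correctly translate into the Cartesian hypotheses of Lemma \ref{L6.2} on the strip. Both translations are clean because the weight $w^r$ and the factor $R^{r-1}dR$ combine precisely with the Jacobian $dR=R\,dx$ to produce the flat Lebesgue measure on the strip.
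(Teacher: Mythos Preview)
Your proposal is correct and follows exactly the paper's approach: the paper's proof is a single sentence stating that the substitution $t=e^x$ reduces Lemma \ref{L6.3} to Lemma \ref{L6.2} with $f(\xi)=e^{r\xi}F(e^\xi)$, and you have simply written out the verification of hypotheses (i)--(iv) and the identification $M(F,r+iv)=\widehat{f}(v)$ in detail.
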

\begin{proof}
Making the standard substitution $t=e^x$ in the integral
$\int_0^\iy F(t)\,t^{r+iv-1}dt$, we reduce Lemma \ref{L6.3} to Lemma \ref{L6.2}
for $f(\xi)=e^{r\xi} F(e^\xi)$.
\end{proof}

\begin{remark}\label{R6.4}
Different versions of Lemmas \ref{L6.2} and \ref{L6.3} are well known
(see, e.g., \cite[Theorems 26 and 31]{T1937} and \cite[Sect. 6]{E1986}).
\end{remark}

In the verification of the $(r,\be)$-Condition, the following version of Lemma \ref{L6.3}
is helpful.
\begin{lemma}\label{L6.5}
Let $r>0$ be a fixed number and
let $u_N,\,\be_N$, and $G_{2N},\,N>r/2$, be numbers and a
polynomial from Definition \ref{D6.1}.
If $\al\in(0,\pi/2)$ and
there exists $N_0=N_0(r)$ such that
\beq\label{E6.7}
\sup_{N\in\N,\,N\ge N_0}\be_N^r\sup_{\vert \arg w\vert< \pi/2-\al}
 \int_{0}^\iy R^{r-1}\left\vert
 G_{2N}\left(iR\, e^{i\arg w}\right) \right\vert^{-1}\,d R \le K_2(\al,r)<\iy,
\eeq
then for the function
$F_N(t):=\left(\left(1+\left(t/u_N\right)^2\right)G_{2N}(i t)\right)^{-1}$,
\beq\label{E6.8}
\sup_{N\in\N,\,N\ge N_0}\be_N^r\left\vert M\left(F_N,r+iv\right)\right\vert
\le (\pi/\al) K_2(\al,r) e^{- (\pi/2-\al)\vert v\vert},\qquad v\in\R.
\eeq
\end{lemma}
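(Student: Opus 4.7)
The plan is to apply Lemma \ref{L6.3} to $F_N$ with the strip parameter $\de:=\pi/2-\al$ and then translate the resulting estimate using hypothesis \eqref{E6.7}. The singularities of $F_N$ all lie on the imaginary axis: $G_{2N}(iw)=0$ forces $iw$ to be a real zero of $G_{2N}$, while $1+(w/u_N)^2$ vanishes only at $w=\pm iu_N$. Hence $F_N$ is analytic on $A_{\pi/2-\al}$ and real-valued on $(0,\iy)$, since $G_{2N}(iR)=(-1)^N\prod_k(R^2+\xi_k^2)$ for real $R$, where $\pm\xi_k$ denote the real zeros of $G_{2N}$. Condition (iii) of Lemma \ref{L6.3} follows from the polynomial growth $|G_{2N}(iw)|\gtrsim |w|^{2N}$ at infinity, which dominates $|w|^r$ whenever $N>r/2$, together with $G_{2N}(0)\neq 0$ near the origin.

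The substantive step is to verify condition (iv) of Lemma \ref{L6.3} and control the associated constant. The key geometric inequality is
\[
\left|1+(w/u_N)^2\right|\geq\min\bigl(1,\,\sin(2\al)\bigr),\qquad w\in A_{\pi/2-\al}.
\]
To prove it, write $(w/u_N)^2=\rho e^{i\phi}$ with $\rho\geq 0$ and $\phi=2\arg w$, $|\phi|<\pi-2\al$, and minimize $|1+\rho e^{i\phi}|^2=1+2\rho\cos\phi+\rho^2$ over $\rho\geq 0$: the infimum equals $1$ when $\cos\phi\geq 0$, and equals $\sin^2\phi$ when $\cos\phi<0$, in which case $\sin^2\phi\geq\sin^2(2\al)$ because $|\phi|<\pi-2\al$. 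Inserting the pointwise estimate
\[
\left|F_N(Re^{i\theta})\right|\leq\frac{1}{\min(1,\sin(2\al))}\cdot\frac{1}{\left|G_{2N}(iRe^{i\theta})\right|},\qquad |\theta|<\pi/2-\al,
\]
and combining with \eqref{E6.7} verifies condition (iv) of Lemma \ref{L6.3} with $K_1(\pi/2-\al,r)\leq K_2(\al,r)/(\be_N^r\min(1,\sin(2\al)))$ for $N\ge N_0$.

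Lemma \ref{L6.3} then gives $\be_N^r|M(F_N,r+iv)|\leq (2/\min(1,\sin(2\al)))\,K_2(\al,r)\,e^{-(\pi/2-\al)|v|}$. To recover the prefactor $\pi/\al$ stated in \eqref{E6.8}, I split into two cases: if $\al\in[\pi/4,\pi/2)$ the minimum equals $1$ and $2\leq \pi/\al$; if $\al\in(0,\pi/4)$ the elementary estimate $\sin(x)\geq 2x/\pi$ on $[0,\pi/2]$ yields $\sin(2\al)\geq 4\al/\pi$, hence $2/\sin(2\al)\leq \pi/(2\al)\leq\pi/\al$. In either case the prefactor is dominated by $\pi/\al$, which finishes the proof. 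I expect the only non-routine ingredient to be the geometric lower bound on $|1+(w/u_N)^2|$ in the sector; once that is in place, the rest is a direct application of Lemma \ref{L6.3}.
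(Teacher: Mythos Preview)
Your proof is correct and follows essentially the same route as the paper's: apply Lemma~\ref{L6.3} with $\de=\pi/2-\al$, check (i)--(iii) from the zero structure of $G_{2N}$ and the factor $1+(w/u_N)^2$, and reduce (iv) to the geometric lower bound on $\left|1+(w/u_N)^2\right|$ in the sector together with hypothesis~\eqref{E6.7}. The paper states the sector bound as $\inf_{|t|\le\pi/2-\al}\inf_{\la\ge0}\left|1+\la e^{2it}\right|\ge\min\{1,(4/\pi)\al\}$ and then simply uses the cruder $(2/\pi)\al$, which gives $K_1=(\pi/2)\al^{-1}K_2$ and hence the prefactor $\pi/\al$ directly; you obtain the slightly sharper $\min(1,\sin(2\al))$ and then recover $\pi/\al$ via Jordan's inequality, but this is the same argument.
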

\begin{proof}
Note first that $\left(1+\left(\cdot/u_N\right)^2\right)G_{2N}(i\cdot)$
is a real-valued polynomial on $\R$ with all imaginary zeros.
Then it is easy to see that conditions (i), (ii), and (iii) of Lemma \ref{L6.3}
with $\de=\pi/2-\al$
 are satisfied for the function $F=F_N$
from Lemma \ref{L6.5}.
Next, for $\al\in (0,\pi/2)$,
\ba
\inf_{\vert t\vert\le \pi/2-\al}\inf_{\la\ge 0}\left\vert 1+\la e^{2it}\right\vert
\ge \min\{1,(4/\pi)\al\}.
\ea
Hence
$\left\vert 1+\left(R/u_N\right)^2 e^{2it}\right\vert
\ge (2/\pi)\al$ for $R\ge 0$ and $N\in\N$.
Therefore, if \eqref{E6.7} from Lemma \ref{L6.5} is valid, then
condition (iv) from Lemma \ref{L6.3} with $\de=\pi/2-\al$ and
 $K_1(\de,r)=(\pi/2)\al^{-1}K_2(\de,r)$ is satisfied as well
for the function $F=F_N$ from Lemma \ref{L6.5}.
Thus, Lemma \ref{L6.5} follows from Lemma \ref{L6.3}.
\end{proof}

In the next lemma, we show that sequences of Chebyshev-like polynomials satisfy
the $(r,\be)$-Condition.

\begin{lemma}\label{L6.6}
Let $G_{2N}(t)=t^{-d}Q_{2N+d,\la}(t)/Q_{2N+d,\la}^{(d)}(0)$ and $\be_N=\be_{N,\la}:=2N+d+\la$,
where $\la=0$ or $\la=1$ and
\bna
&&Q_{2N+d,0}(t):=T_{2N+d}(t):=(1/2)\left(\left(t+\sqrt{t^2-1}\right)^{2N+d}
+\left(t+\sqrt{t^2-1}\right)^{-(2N+d)}\right),\label{E6.9}\\
&&Q_{2N+d,1}(t):=U_{2N+d}(t):=
\frac{\left(t+\sqrt{t^2-1}\right)^{2N+d+1}
-\left(t+\sqrt{t^2-1}\right)^{-(2N+d+1)}}{2\sqrt{t^2-1}},\label{E6.9a}\\
&&T_{2N+d}^{(d)}(0)=(-1)^N(2N+1)^d, \qquad
U_{2N+d}^{(d)}(0)=(-1)^N(2N+2)^d.
\label{E6.10}
\ena
Then inequality \eqref{E6.1} holds for any $r>1-d,\,\mu=3+\max\{r+d-2,0\}$,
and $N_0=N_0(r,d,\la)$.
\end{lemma}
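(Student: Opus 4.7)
The plan is to apply Lemma \ref{L6.5} with an angle $\al=\al(|v|)\in(0,\pi/2)$ depending on $v:=\IM s$, and to optimize $\al$ at the end. The starting point is to exploit the closed forms \eqref{E6.9} and \eqref{E6.9a}: writing $z=t+\sqrt{t^2-1}$ on the branch where $|z|\geq 1$, so that $T_{2N+d}(t)=(z^{2N+d}+z^{-(2N+d)})/2$ and $U_{2N+d}(t)$ admits an analogous representation, I would derive sharp estimates of $|Q_{2N+d,\la}(iRe^{i\phi})|$ on rays $|\phi|\leq \pi/2-\al$ in terms of $|z|^{2N+d}$. Parametrizing $z=e^{\eta+i\xi}$, the equations $\cosh\eta\cos\xi=-R\sin\phi$ and $\sinh\eta\sin\xi=R\cos\phi$ determine $\eta$ and $\xi$, and an elementary analysis yields $\eta\asymp\log R$ for $R\geq 1$ and $\eta\gtrsim R\sin\al$ for $R\in(0,1]$, uniformly in $\phi$.

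The verification of \eqref{E6.7} proceeds by splitting the $R$-integration at $R=1$. For $R>1$, the bound $|z|\asymp R$ gives $|G_{2N}(iRe^{i\phi})|^{-1}\lesssim\be_N^d R^d(2R)^{-(2N+d)}$, so once $N\geq N_0(r,d,\la)$ the large-$R$ contribution to $\be_N^r$ times the integral is $O(\be_N^{r+d}2^{-2N})$ and is negligible. For $R\in(0,1]$, the lower bound on $\eta$ yields
\ba
|G_{2N}(iRe^{i\phi})|^{-1}\lesssim \frac{(\be_N R)^d}{\sinh((2N+d)\eta)}
\ea
(with the $\cosh$-analog in the case $d=0$, $\la=0$, using $T_{2N}$). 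After the substitution $u=(2N+d)\eta\asymp\be_N R\sin\al$, the integral $\be_N^r\int_0^1 R^{r-1}|G_{2N}(iRe^{i\phi})|^{-1}dR$ is bounded by a constant depending on $r$ times $(\sin\al)^{-2-\max\{r+d-2,0\}}$; the condition $r+d>1$, i.e.\ $r>1-d$, is exactly what is needed to keep $\int_0^\iy u^{r+d-1}/\sinh(u)\,du$ convergent at $u=0$.

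Finally, Lemma \ref{L6.5} delivers the bound $\be_N^r|M(F_N,r+iv)|\lesssim_r \al^{-3-\max\{r+d-2,0\}}e^{-(\pi/2-\al)|v|}$, and choosing $\al=\min\{\al_0,1/|v|\}$ converts the exponential into $e^{-(\pi/2)|v|}$ times an absolute constant while turning the polynomial factor into $(1+|v|)^{3+\max\{r+d-2,0\}}$, which is exactly \eqref{E6.1}. The principal obstacle is the small-$R$ lower bound on $|G_{2N}|$ on rays $\phi$ near $\pm\pi/2$, since there the ray $iRe^{i\phi}$ approaches the interval $[-1,1]$ that contains the zeros of $Q_{2N+d,\la}$; quantifying the rate at which this bound degrades as $\al\to 0$ is what pins down the exponent $\mu$, and it is this degradation, together with the integrability threshold $r+d>1$, that forces the precise shape of $\mu=3+\max\{r+d-2,0\}$.
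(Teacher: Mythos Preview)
Your approach is correct and essentially the same as the paper's: both invoke Lemma~\ref{L6.5}, use the Joukowski substitution $w=(z+1/z)/2$ (equivalently $z=e^{\eta+i\xi}=\rho e^{it}$) to bound $|Q_{2N+d,\la}(w)|$ from below in terms of $|z|^{2N+d}$, verify \eqref{E6.7} with $K_2(\al,r)\lesssim \al^{-2-\max\{r+d-2,0\}}$, and then optimize $\al\sim 1/|\IM s|$ to obtain \eqref{E6.1} with $\mu=3+\max\{r+d-2,0\}$. The only cosmetic difference is that the paper changes variables from $R$ to $\rho=|z|$ and carries out a single integral over $\rho\in[1,\iy)$ (computing $R(\rho,\arg w)$ and $dR/d\rho$ explicitly), whereas you keep the $R$-variable and split the integration at $R=1$.
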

\begin{proof}
We first prove the following inequality:
\bna\label{E6.11}
&&\sup_{N\in\N,\,N\ge N_0}\be_{N,\la}^r Q_{2N+d,\la}^{(d)}(0) \sup_{\al<\arg w< \pi-\al}
 \int_{0}^\iy R^{r+d-1}\left\vert
 Q_{2N+d,\la}\left(R\, e^{i\arg w}\right) \right\vert^{-1}\,d R\nonumber\\
 &&\lesssim_{r,d,\la} \al^{-(2+\max\{r+d-2,0\})},
 \ena
 where $\al\in(0,\pi/2)$ is a fixed number and $N_0=N_0(r,d,\la)$.

 It follows from \eqref{E6.9} and \eqref{E6.9a} that
 \bna
 &&T_{2N+d}(w)=(1/2)\left(z^{2N+d}+z^{-(2N+d)}\right),\qquad w=R e^{i\arg w},\quad z=\rho e^{it},\label{E6.12}\\
 &&U_{2N+d}(w)=\frac{
 z^{2N+d+2}-z^{-(2N+d)}}{z^2-1},\qquad w=R e^{i\arg w}
 ,\quad z=\rho e^{it},\label{E6.12a}
 \ena
 where $z$ is a point outside of the unit circle
 and $w=(1/2)(z+1/z)$ is a point on the ellipse with the foci $(-1,0)$ and $(1,0)$
 and with the sum of its semi-axes equal to $\rho>1$. Since
 $
 w=(1/2)((\rho+1/\rho)\cos t+i(\rho-1/\rho) \sin t),
 $
 we see that
 \bna
 &&R=R(\rho,\arg w)=\frac{\rho^2-1}{2}\sqrt{\frac{1}{\rho^2}+\frac{4}
 {(\rho^2-1)^2+(\rho^2+1)^2\tan^2(\arg w)}},\label{E6.13}\\
 &&\frac{d R(\rho,\arg w)}{d \rho}=\frac{\rho(\rho^4-1)}{4R(\rho,\arg w)}
 \sqrt{\frac{1}{\rho^4}+\frac{16 \tan^2(\arg w)}
 {\left((\rho^2-1)^2+(\rho^2+1)^2\tan^2(\arg w)\right)^2}}.\label{E6.14}
 \ena
 Let $w$ satisfy the condition $\al<\arg w< \pi-\al$,
 where $\al\in(0,\pi/2)$ is a fixed number (cf. \eqref{E6.11}).
 Then $\al^2\le \tan^2(\arg w)$ and
 the following relations are immediate consequences of \eqref{E6.13} and \eqref{E6.14}:
 \beq\label{E6.15}
 \frac{\left(\rho^2-1\right)}{\rho}
 \lesssim R(\rho,\arg w) \lesssim \frac{\left(\rho^2-1\right)}{\al},\qquad
 0\le\frac{d R(\rho,\arg w)}{d \rho} \lesssim \frac{\rho\left(\rho^4-1\right)}{\al^2 \,R(\rho,\arg w)}.
 \eeq
 Using substitution \eqref{E6.13}, relations \eqref{E6.12}, \eqref{E6.12a},  \eqref{E6.15},
 and the elementary inequality
 \ba
 \frac{\rho-1}{\rho^{n+1}-1}\le \frac{1}{(n-k+1)\rho^k},\qquad \rho> 1, \quad k,n\in\N,\quad 1\le k\le n,
 \ea
 we obtain for $N>N_0(r,d,\la)$,
 \ba
 &&\int_{0}^\iy R^{r+d-1}\left\vert
 Q_{2N+d,\la}\left(R\, e^{i\arg w}\right) \right\vert^{-1}\,d R
 \lesssim_\la \frac{1}{\al^2}\int_{1}^\iy \frac{(R(\rho,\arg w))^{r+d-2}\,\rho\left(\rho^4-1\right)}
 {\rho^{2N+d+2\la}-\rho^{-(2N+d)}}d\rho\\
 &&\lesssim_{r,d,\la} \frac{1}{N\al^{2+\max\{r+d-2,0\}}}\int_{1}^\iy \frac{(\rho-1)^{r+d-2}}{\rho^N}d\rho
 \lesssim_{r,d,\la} \frac{1}{\al^{2+\max\{r+d-2,0\}}N^{r+d}}.
 \ea
 Hence we arrive at \eqref{E6.11}.
 Therefore, the constant $K_2$ from \eqref{E6.7} satisfies the inequality
 \beq\label{E6.15a}
 K_2(\al,r)\lesssim_{r,d,\la} \al^{-(2+\max\{r+d-2,0\})}.
 \eeq
Next, using Lemma \ref{L6.5}, we have from \eqref{E6.8}, \eqref{E6.10}, and \eqref{E6.15a}
that for any $\al\in (0,\pi/2)$,
the following inequality holds:
 \beq\label{E6.16}
\sup_{N\in\N,\,N\ge N_0} \be_{N,\la}^{r}
\left\vert\bigintss_0^\iy\frac{t^{s+d-1}Q_{2N+d,\la}^{(d)}(0)}{\left(1+\left(t/u_N\right)^2\right)
Q_{2N+d,\la}(i t)}dt\right\vert
\lesssim_{r,d,\la} \frac{1}{\al^{3+\max\{r+d-2,0\}}}
e^{-(\pi/2-\al)\left\vert\mathrm{Im}\,s\right\vert}.
\eeq
Finally, choosing
\ba
\al=\left\{\begin{array}{ll}
\left\vert\mathrm{Im}\,s\right\vert^{-1}, &\left\vert\mathrm{Im}\,s\right\vert>3/\pi,\\
\pi/3, &\left\vert\mathrm{Im}\,s\right\vert\le 3/\pi
\end{array}\right.
\ea
in \eqref{E6.16}, we arrive at \eqref{E6.1} with $\mu=3+\max\{r+d-2,0\}$.
\end{proof}

\section{Asymptotics of $B_N^*$}\label{S5}
\setcounter{equation}{0}
The limit relations between the Mellin transforms of the scaled associated
$B$-spline $B_N^*$ (see Definition \ref{D3.2}) and
the theta-like functions $\varTheta_d$ (see \eqref{E4.1}) are discussed in the following theorem.

\begin{theorem}\label{T5.1}
Let $x_k=x_{k,N,d},\,1\le  k\le N,$ be positive zeros of $P_{2N+d}$, where
$\left(P_{2N+d}\right)_{N=1}^\iy\in\PPP_d(\be,\g,\de)$
(see Definition \ref{D1.1}).
 In addition, let $\left(u_N\right)_{N=1}^\iy$ be a sequence of positive numbers
 such that $u_N\notin \{x_1,\ldots,x_N\}$ and
 $\lim_{N\to\iy} \be_N u_N =\iy$.
 If $\mathrm{Re} \,s>d$, then for the set of knots
 $\Om_{N+2,d}:=\{0, u_N^2, x_{1,N,d}^2,\ldots, x_{N,N,d}^2\}$
  the following relation holds:
 \bna\label{E5.1}
 \lim_{N\to\iy}\left[\frac{u_N^2 \,\prod_{k=1}^Nx_{k,N,d}^2}{N}\,
 \int_0^{\iy}B_N^*\left(\frac{N}{\be^2_N}t,\Om_{N+2,d}\right)t^{(s-d)/2-1}dt\right]
 =\int_0^\iy \varTheta_d(t) t^{(s-d)/2-1}dt.
 \ena
 The convergence in \eqref{E5.1} is uniform on any compact subset
  of the line $\mathrm{Re}\,s=r>d$.
\end{theorem}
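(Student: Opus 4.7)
The theorem should follow by combining Corollary \ref{C3.10} with a Mellin-transform identification of the integral $\int_0^\iy t^{s/2-1}/h_d(\sqrt{t})\,dt$ in terms of $\varTheta_d$. The first step is a scaling. In the integral
$\int_0^{\iy}B_N^*\bigl(\tfrac{N}{\be_N^2}t,\Om_{N+2,d}\bigr)t^{(s-d)/2-1}dt$
I substitute $\tau=(N/\be_N^2)t$, which produces a prefactor $(\be_N^2/N)^{(s-d)/2}$ in front of
$\int_0^{\iy}B_N^*(\tau,\Om_{N+2,d})\tau^{(s-d)/2-1}d\tau$, and by Property \ref{P3.8} the latter is really an integral over $[0,\max \Om_{N+2,d}]$. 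Absorbing all the constants, the expression inside the bracket in \eqref{E5.1} becomes exactly
$\be_N^{s-d}\,N^{-1-(s-d)/2}\,u_N^2\,\prod_k x_{k,N,d}^2\,\int_0^{\max \Om_{N+2,d}}B_N^*(\tau,\Om_{N+2,d})\tau^{(s-d)/2-1}d\tau$, which is precisely the quantity controlled by Corollary \ref{C3.10}. Hence the limit equals
$\frac{1}{\G(1+(s-d)/2)}\int_0^\iy t^{s/2-1}/h_d(\sqrt{t})\,dt$,
and the uniform convergence on compact subsets of $\{\RE s=r>d\}$ is inherited directly from that same corollary.

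The remaining step is purely computational: I must identify
\beq\label{EP1}
\int_0^\iy \varTheta_d(t)\,t^{(s-d)/2-1}dt
=\frac{1}{\G\!\left(1+\tfrac{s-d}{2}\right)}\int_0^\iy\frac{t^{s/2-1}}{h_d(\sqrt{t})}dt.
\eeq
For this I use Lemma \ref{L4.1}(b), which says $\int_0^\iy \varTheta_d(1/z)e^{-tz}dz = t^{d/2-1}/h_d(\sqrt{t})$. Multiplying both sides by $t^{(s-d)/2}$, integrating in $t$ over $(0,\iy)$, and swapping the order of integration via Fubini, the inner $t$-integral produces a gamma factor $\G(1+(s-d)/2)/z^{1+(s-d)/2}$, and the substitution $u=1/z$ converts the outer integral into $\G(1+(s-d)/2)\int_0^\iy \varTheta_d(u)\,u^{(s-d)/2-1}du$; equating the two sides gives \eqref{EP1}. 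The use of Fubini is justified by Lemma \ref{L4.1}(a) and (c): $\varTheta_d$ is bounded near the origin and decays faster than any exponential at infinity, so $\varTheta_d\in\LL$ and both iterated integrals converge absolutely whenever $\RE s>d$ (convergence at $0$ comes from $\RE((s-d)/2)>0$, and convergence at $\iy$ is automatic).

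Combining the two steps yields \eqref{E5.1}, and the uniform convergence assertion is transferred from Corollary \ref{C3.10} because the scaling factor $(\be_N^2/N)^{(s-d)/2}$ has been absorbed into $\be_N^{s-d}N^{-1-(s-d)/2}$ and no further $s$-dependent operation is performed. I do not anticipate a serious obstacle: the proof is a bookkeeping of scalings plus a standard Mellin/Laplace swap. The one place needing care is the Fubini argument for \eqref{EP1}, which must be verified for all $s$ with $\RE s=r>d$ uniformly on compacta so as not to damage the uniform-convergence conclusion; the decay estimate \eqref{E4.2} and the exponential growth of $h_d(\sqrt{t})$ make this routine, but it is the step most worth spelling out.
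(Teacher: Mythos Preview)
Your proposal is correct and follows essentially the same approach as the paper: reduce to Corollary \ref{C3.10} via the substitution $\tau=(N/\be_N^2)t$, then identify the right-hand side using Lemma \ref{L4.1}(b). The only cosmetic difference is that the paper packages your Fubini computation for \eqref{EP1} as an application of the Mellin convolution theorem (Proposition \ref{P1.1a}) with $F=\varTheta_d$ and $G(\tau)=\tau e^{-\tau}$, obtaining $M(H_d,(s-d)/2)=M(\varTheta_d,(s-d)/2)\,\G(1+(s-d)/2)$; your direct double-integral argument is exactly what underlies that proposition, so the two proofs are the same in substance.
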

\begin{proof}
 Note first that $F(\tau):=\varTheta_d(\tau)$ belongs to $\LL$ by Lemma \ref{L4.1} (b)
  and also $G(\tau):=\tau e^{-\tau}\in \LL$.
  Then by relation \eqref{E4.3},
 the Mellin convolution $H_d$ of two functions $F$ and $G$
 (see \eqref{E1.3}) is
 \beq\label{E5.3}
 H_d(t)=\int_0^\iy \varTheta_d(t/\tau)e^{-\tau}d\tau
 =t\int_0^\iy \varTheta_d(1/z)e^{-tz}dz
=\frac{t^{d/2}}{h_d\left(\sqrt{t}\right)},\qquad t>0.
 \eeq
 Next, since $M(G,s)=\G(1+s)$, we obtain from \eqref{E5.3} and relation \eqref{E1.4}
 of Proposition \ref{P1.1a}
 \beq\label{E5.4}
 \int_0^\iy\frac{t^{s/2-1}}{h_d(\sqrt{t})}dt
 =M\left(H_d,\frac{s-d}{2}\right)=M\left(\varTheta_d,\frac{s-d}{2}\right)
 \G\left(1+\frac{s-d}{2}\right).
 \eeq
 Thus, changing the variable in the left-hand side of \eqref{E3.16}
 and replacing the right-hand side of \eqref{E3.16}
 with the right-hand side of \eqref{E5.4},
 we arrive at \eqref{E5.1}.
 The uniform convergence in \eqref{E5.1}
  follows from Corollary \ref{C3.10}.
\end{proof}

The following question  arises naturally: is it possible also
to prove pointwise asymptotic relations for $B_N^*$?
We address this question in the following theorem.

\begin{theorem}\label{T5.2}
Let the sequences
$\left(P_{2N+d}\right)_{N=1}^\iy\in\PPP_d(\be,\g,\de),\,
 \left(u_N\right)_{N=1}^\iy$ and the set $\Om_{N+2,d}$
 be the same as in Theorem \ref{T5.1}.
 In addition, assume that the sequence
 $\left(z^{-d}P_{2N+d}(z)/P_{2N+d}^{(d)}(0)\right)_{N=1}^\iy$
 satisfies the $(r,\be)$-Condition (see Definition \ref{D6.1}).
 Then the following relation holds:
 \beq\label{E5.5}
 \lim_{N\to\iy}\left[\frac{u_N^2 \,\prod_{k=1}^Nx_{k,N,d}^2}{N}\,
 B_N^*\left(\frac{N}{\be^2_N}t,\Om_{N+2,d}\right)\right]
 =\varTheta_d(t),\qquad t\in [0,\iy).
 \eeq
 \end{theorem}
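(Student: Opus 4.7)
The plan is to upgrade the weak (Mellin-level) convergence of Theorem~\ref{T5.1} to pointwise convergence via the Mellin inversion formula (Proposition~\ref{P1.1b}), with the $(r,\be)$-Condition supplying the uniform $L_1$-majorant required for dominated convergence. Set $\tilde B_N(t):=A_N B_N^*(C_N t,\Om_{N+2,d})$, $A_N:=u_N^2\prod_{k=1}^N x_{k,N,d}^2/N$, $C_N:=N/\be_N^2$, so that Theorem~\ref{T5.1} reads $M(\tilde B_N,s')\to M(\varTheta_d,s')$ uniformly on compact subsets of $\mathrm{Re}\,s'=r_1/2-d/2$ with $r_1>d$, while the target \eqref{E5.5} is precisely $\tilde B_N(t)\to\varTheta_d(t)$ on $[0,\iy)$.

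The first substantive step is to derive a closed-form representation for $M(\tilde B_N,\cdot)$. Applying Proposition~\ref{P2.6} in the $y$-variable with $G_{2N}(y)=Q_N(y^2)/Q_N(0)$ at $y_N=u_N$, together with Corollary~\ref{C3.9}(a) in the $u=y^2$ variable for $\Om_{N+2,d}$ with $w(u_N^2)=u_N^2Q_N(u_N^2)$, produces two expressions for the same interpolation difference $u_N^{s-d}-L_N\bigl(u_N^2,u^{(s-d)/2},uQ_N(u)\bigr)$. Equating them, simplifying via Euler's reflection formula \eqref{E3.18}, and absorbing the scaling yields
\[
M(\tilde B_N,s')=\frac{2(N+1)!\,\be_N^{2s'}}{N^{1+s'}\,\G(1+s')\,\G(N+1-s')}\,J_N(2s'),
\]
where $J_N(\tilde s):=\int_0^\iy t^{\tilde s-1}\bigl[(1+(t/u_N)^2)G_{2N}(it)\bigr]^{-1}\,dt$, with $G_{2N}(z):=z^{-d}P_{2N+d}(z)/P_{2N+d}^{(d)}(0)$, is exactly the integral controlled by Definition~\ref{D6.1}.

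Now fix $r>0$ for which the $(r,\be)$-Condition is assumed and work on $\mathrm{Re}\,s'=r/2$; writing $s'=r/2+i\tau$, the condition gives $|J_N(r+2i\tau)|\lesssim_r\be_N^{-r}(1+|\tau|)^\mu e^{-\pi|\tau|}$ for $N\ge N_0$, so $|\be_N^{2s'}|=\be_N^r$ cancels the $\be_N^{-r}$ exactly. Stirling's formula supplies $|\G(1+s')|^{-1}$ and $|\G(N+1-s')|^{-1}$ each with a growth factor $e^{\pi|\tau|/2}$ in the polynomial regime $|\tau|\gg N$, producing a combined $e^{\pi|\tau|}$ that \emph{exactly} cancels the $e^{-\pi|\tau|}$ from $J_N$ (the matching of the constant $\pi/2$ in Definition~\ref{D6.1} is what makes this work). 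The remaining factor $(N+1)!/[N^{1+s'}\G(N+1-s')]$ tends to $1$ by \eqref{E3.19}, and a uniform Stirling comparison across the regimes $|\tau|\lesssim N$ (where $|\G(N+1-s')|$ obeys Gaussian asymptotics) and $|\tau|\gg N$ (polynomial asymptotics) yields a uniform bound
\[
|M(\tilde B_N,r/2+i\tau)|\le\Phi(\tau),\qquad \Phi\in L_1(\R),\qquad N\ge N_0,
\]
with only polynomial residual growth in $|\tau|$.

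With the majorant in hand, the argument concludes quickly. Both $\tilde B_N\in\LL$ (continuous on $[0,\iy)$ and compactly supported by Properties~\ref{P3.6} and~\ref{P3.8}) and $\varTheta_d\in\LL$ (Lemma~\ref{L4.1}), so Proposition~\ref{P1.1b} gives, for $t>0$,
\[
\tilde B_N(t)=\frac{1}{2\pi}\bigintss_\R M(\tilde B_N,r/2+i\tau)\,t^{-r/2-i\tau}\,d\tau,
\]
and analogously for $\varTheta_d$. Lebesgue's dominated convergence theorem with majorant $\Phi(\tau)t^{-r/2}$ and the pointwise (indeed uniform-on-compacts) convergence from Theorem~\ref{T5.1} delivers \eqref{E5.5} for every $t>0$. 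The boundary case $t=0$ is handled directly: with $W(u)=u(u-u_N^2)\prod_k(u-x_k^2)$ one has $W'(0)=(-1)^{N+1}u_N^2\prod_k x_k^2$, so by Property~\ref{P3.8}, $B_N^*(0)=(N+1)/(u_N^2\prod_k x_k^2)$ and therefore $\tilde B_N(0)=(N+1)/N\to 1=\varTheta_d(0+)$. The principal technical obstacle is the uniform Stirling comparison in the second step: the exponential cancellation $e^{-\pi|\tau|}\cdot e^{\pi|\tau|}=1$ must survive uniformly in $N$ through the transition $|\tau|\asymp N$, where Gaussian and polynomial gamma-asymptotics hand off; verifying this is where the careful majorant estimate lives.
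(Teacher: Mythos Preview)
Your proposal is correct and follows essentially the same route as the paper: derive the closed form for $M(\tilde B_N,\cdot)$ by combining Proposition~\ref{P2.6} with Corollary~\ref{C3.9}(a) and Euler's reflection formula (the paper's \eqref{E5.7}--\eqref{E5.8}), use the $(r,\be)$-Condition together with gamma-function asymptotics to obtain a uniform-in-$N$ integrable majorant on the critical line, and then apply Mellin inversion plus dominated convergence. The ``principal technical obstacle'' you flag---the Stirling comparison through the transition $|\tau|\asymp N$---is exactly what the paper carries out in \eqref{E5.11}--\eqref{E5.15}, splitting into the regimes $M\le\sqrt{y}$ and $M>\sqrt{y}$ to show that the factor $(N+1)!/|N^{1+s'}\G(N+1-s')|$ contributes at most $e^{(\pi/4)|\mathrm{Im}\,s|}|\mathrm{Im}\,s|^{-M_0}$ for \emph{arbitrary} $M_0>0$; this arbitrariness (not merely polynomial residual growth) is what guarantees $\Phi\in L_1(\R)$ after the exponentials cancel against the $|\mathrm{Im}\,s|^{\mu}$ from \eqref{E6.1}. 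Your explicit treatment of $t=0$ via Property~\ref{P3.8} and the computation $\tilde B_N(0)=(N+1)/N\to 1=\varTheta_d(0+)$ is in fact more careful than the paper, which simply writes the inversion formulae \eqref{E5.21}--\eqref{E5.21a} for $t\in[0,\infty)$.
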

 \begin{proof} The proof consists of three steps.\\
 \textbf{Step 1.}
 We first need certain formulae.
 Assume that $d<\mathrm{Re}\, s<2N+d+1$ and $s-d\ne 2,\,4,\ldots$.
 Then setting $Q_N(u)=G_{2N}(y)=y^{-d}P_{2N+d}(y)/P_{2N+d}^{(d)}(0)$
 and $u_N=\left\vert y_N\right\vert$,
 we obtain from \eqref{E2.10}, \eqref{E2.3}, and \eqref{E2.4}
 \bna\label{E5.7}
 &&u_N^{s-d}-
 L_{N}\left(u^2_N,u^{\frac{s-d}{2}},u Q_N(u)\right)\nonumber\\
 &&=\frac{2i^d\sin((s-d)\pi/2)}{\pi}
 \frac{Q_N\left(u_N^2\right)}{Q_N\left(0\right)}P_{2N+d}^{(d)}(0)
 \bigintss_0^\iy\frac{t^{s-1}}{\left(1+\left(t/u_N\right)^2\right)P_{2N+d}(i t)}dt.
 \ena
 Using now \eqref{E5.7}, \eqref{E3.10}, \eqref{E3.11}, and \eqref{E3.18},
 we arrive at
 \bna\label{E5.8}
 &&g_{N}(s):=
 \frac{u_N^2 \,\prod_{k=1}^Nx_{k,N,d}^2}{N}\,
 \int_0^{\iy}B_N^*\left(\frac{N}{\be^2_N}t,\Om_{N+2,d}\right)t^{(s-d)/2-1}dt \nonumber\\
   &&=u_N^2 \,\be_N^{s-d}\,N^{-1-(s-d)/2}\,\prod_{k=1}^Nx_{k,N,d}^2
 \int_0^{\max \Om_{N+2,d}}B_N^*\left(t,\Om_{N+2,d}\right)t^{(s-d)/2-1}dt\nonumber\\
 &&=\left(\frac{2i^d}{\G\left(1+\frac{s-d}{2}\right)}\right)
 \left(\frac{(N+1)!}{N^{1+(s-d)/2}\G\left(N+1-\frac{s-d}{2}\right)}\right)
 \bigintss_0^\iy\frac{\be_N^{s-d}\,P_{2N+d}^{(d)}(0)\,t^{s-1}}
 {\left(1+\left(t/u_N\right)^2\right)P_{2N+d}(i t)}dt,
 \ena
 where by \eqref{E5.1},
 \beq\label{E5.9}
 \lim_{N\to\iy} g_{N}(s)=M\left(\varTheta_d,\frac{s-d}{2}\right),
 \qquad \mathrm{Re}\,s>d.
 \eeq
 \textbf{Step 2.}
 Our next goal is to find a "good" upper estimate of $\left\vert g_{N}(s)\right\vert$
 in terms of $\vert\mathrm{Im}\,s\vert$. In other words, we need to estimate the absolute values of
 the three factors on the right-hand side of \eqref{E5.8}.

 The first estimate
 \beq\label{E5.10}
 \left\vert\frac{2i^d}{\G\left(1+\frac{s-d}{2}\right)}\right\vert
 < \frac {12e^{(\pi/4)\vert\mathrm{Im}\,s\vert}}
 {\vert\mathrm{Im}\,s\vert^{\frac{\mathrm{Re}\,s-d+1}{2}}},
 \qquad \vert\mathrm{Im}\,s\vert \gtrsim_{\mathrm{Re}\,s,d} 1,
 \eeq
directly follows from the known relation
 \ba
 \lim_{y\to\iy}\G(\sa+iy)e^{(\pi/2)\vert y\vert}\vert y\vert^{1/2-\sa}=\sqrt{2\pi}
 \ea
 (see, e.g., \cite[Eq. 1.18(6)]{ErdI1953}).

 Next, it follows from \eqref{E3.19} that
 \beq\label{E5.11}
 \frac{(N+1)!}{\left \vert N^{1+(s-d)/2}\G\left(N+1-\frac{s-d}{2}\right)\right\vert}
 <\frac{2\G(M)}{\vert\G(M+iy)\vert},\quad M:=N+1-(\mathrm{Re}\,s-d)/2
 \gtrsim_{\mathrm{Re}\,s,d} 1,
 \eeq
 where $y=(-\mathrm{Im}\,s)/2$. Since $\vert\arg(M+iy)\vert\le \pi/2$,
 we can apply Stirling's formula (see, e.g., \cite[Eq. 1.18(2)]{ErdI1953})
 to the quotient of the two gamma functions in the right-hand side of \eqref{E5.11}. Thus,
 \beq\label{E5.12}
 \frac{\G(M)}{\vert\G(M+iy)\vert}
 \le \frac{e^{y\arg(M+iy)}}{\left(\sqrt{1+(y/M)^2}\right)^{M-1/2}}\left(1+C/M\right)
 :=A(M,y)\left(1+C/M\right),
 \eeq
 where $C>0$ is an absolute constant.
 Note that $y\arg(M+iy)\ge 0$,
 so without loss of generality we can assume $y\ge 1$ and $M\ge 1$.

 Furthermore, if $1\le M\le \sqrt{y}$, then
  \beq\label{E5.13}
  A(M,y) \le y^{-M/2+1/4}e^{(\pi/2)y}.
  \eeq
  If $1\le \sqrt{y}<M$, then
  \beq\label{E5.14}
  A(M,y)
  \le e^{y\arg(M+iy)}
  \le e^{(\pi/2)y}e^{-y\,\mathrm{arccot}(\sqrt{y})}
  \le e^{(\pi/2)y-\sqrt{y/2}}.
  \eeq
  It follows from \eqref{E5.13} and \eqref{E5.14} that for any fixed $M_0>0$ and any
  $M\ge 2M_0+1/2$,
  \beq\label{E5.14a}
  A(M,y)
  \lesssim_{M_0} y^{-M_0}e^{(\pi/2)y},
  \qquad  y\ge 1.
  \eeq
  Finally, collecting relations \eqref{E5.11}, \eqref{E5.12}, and \eqref{E5.14a},
   we obtain the second estimate
  \beq\label{E5.15}
  \frac{(N+1)!}{\left \vert N^{1+(s-d)/2}\G\left(N+1+\frac{s-d}{2}\right)\right\vert}
  \lesssim_{M_0} \frac{e^{(\pi/4)\vert\mathrm{Im}\,s\vert}}{\vert\mathrm{Im}\,s\vert^{M_0}},
  \qquad N\gtrsim_{\mathrm{Re}\,s,M_0,d} 1,\quad \vert\mathrm{Im}\,s\vert\ge 1,
  \eeq
  for any fixed $M_0>0$.

  The third estimate for the integral in the right-hand side of \eqref{E5.8}
  follows from the $(r,\be)$-Condition. Indeed, by the assumption of Theorem \ref{T5.2}
  and Definition \ref{D6.1}, there exist constants $r^*=r+d>d$ and
  $\mu(r^*,d)$ such that
  \beq\label{E5.16}
  \be_N^{\mathrm{Re}\,s-d}\left\vert\bigintss_0^\iy\frac{\,P_{2N+d}^{(d)}(0)\,t^{s-1}}
 {\left(1+\left(t/u_N\right)^2\right)P_{2N+d}(i t)}dt\right\vert
 \lesssim_{\mathrm{Re}\,s,d}\left\vert\mathrm{Im}\,s\right\vert^{\mu(\mathrm{Re}\,s,d)}
  e^{-(\pi/2)\left\vert\mathrm{Im}\,s\right\vert},
  \quad \mathrm{Re}\,s=r^*.
  \eeq
  Thus, combining estimates \eqref{E5.10}, \eqref{E5.15}, and \eqref{E5.16} with
  equality \eqref{E5.8}, we obtain
  \beq\label{E5.17}
  \left\vert g_{N}(s)\right\vert
  \lesssim_{\mathrm{Re}\,s,M_0,d}
 \vert\mathrm{Im}\,s\vert^{-M_0-\frac{\mathrm{Re}\,s-d+1}{2}+\mu(\mathrm{Re}\,s,d)},
 \qquad \mathrm{Re}\,s=r^*,
  \eeq
  for any fixed $M_0>0$
  and $N\gtrsim_{r^*,M_0,d} 1, \,\vert\mathrm{Im}\,s\vert\gtrsim_{r^*,d} 1$.
  \vspace{.1in}\\
  \textbf{Step 3.}
  Here, we discuss certain properties of $g_N$ and conclude the proof of the theorem.
  Let us define the function
  \beq\label{E5.18}
  Q_{N,d}(t)
   :=  \frac{u_N^2 \,\prod_{k=1}^Nx_{k,N,d}^2}{N}\,
 B_N^*\left(\frac{N}{\be^2_N}t,\Om_{N+2,d}\right),
   \qquad t\in[0,\iy).
  \eeq
  \begin{proposition}\label{P5.3}
  There exists $r^*>d$ such that the following statements hold:\\
  (a) $Q_{N,d}\in\LL.$\\
  (b) $g_N(s)=M\left(Q_{N,d},\frac{s-d}{2}\right),\,\mathrm{Re}\,s=r^*$.\\
  (c) $\lim_{N\to\iy}M\left(Q_{N,d},\frac{s-d}{2}\right)=M\left(\varTheta_d,\frac{s-d}{2}\right),\,
  \mathrm{Re}\,s=r^*$, and the convergence is uniform on any compact subset
  of the line $\mathrm{Re}\,s=r^*$.\\
  (d) For any $M_1>0$,
  \beq\label{E5.19}
  \sup_{N\in\N} \left\vert g_N(r^*+2i\tau)\right\vert
  \lesssim_{r^*,M_0,d}(1+\vert\tau\vert)^{-M_1},
  \qquad \tau\in \R.
  \eeq
  (e) $M\left(Q_{N,d},(r^*-d)/2+i\cdot\right)\in L_1(\R)$.
  \end{proposition}
  \begin{proof}
  Statement (a) follows from \eqref{E5.18} and Property \ref{P3.8},
  while (b) is a consequence of \eqref{E5.8}.
  In addition, (c) follows from statement (b) and relation \eqref{E5.9}.
  To prove (d), we first note that by statement (c), for a compact subset $K$
  of the line $\mathrm{Re}\,s=r^*$,
  \beq\label{E5.20}
  \sup_{N\in\N} \max_{\tau\in K}  \left\vert g_N(r^*+2i\tau)\right\vert<\iy.
  \eeq
  Then \eqref{E5.19} follows from \eqref{E5.20} and \eqref{E5.17}.
  Finally, (e) follows from (a), (b), and (d).
  \end{proof}
  Next,
  \bna
  &&Q_{N,d}(t)
  =\frac{1}{2\pi}\int_{\R} M(Q_{N,d},(r^*-d)/2+i\tau)\,t^{-(r^*-d)/2-i\tau}d\tau,
  \qquad t\in[0,\iy),\label{E5.21}\\
  &&\varTheta_d(t)
  =\frac{1}{2\pi}\int_{\R} M(\varTheta_{d},(r^*-d)/2+i\tau)\,t^{-(r^*-d)/2-i\tau}d\tau,
  \qquad t\in[0,\iy). \label{E5.21a}
  \ena
  Indeed, inversion formula \eqref{E5.21} follows from Proposition \ref{P1.1b}
  and statements (a) and (e) of Proposition \ref{P5.3}.
  To prove \eqref{E5.21a}, we first note that $\varTheta_d\in \LL$ by Lemma \ref{L4.1} (b).
  Next, $ M(\varTheta_{d},(r^*-d)/2+i\cdot)\in\LL$ as well by relations
  \eqref{E5.9} and \eqref{E5.19}. Hence \eqref{E5.21a} follows from Proposition \ref{P1.1b}.

  Finally, taking account of \eqref{E5.21}, \eqref{E5.21a},
  and statements (b), (c), and (d) of Proposition \ref{P5.3},
  we see that by the Dominated Convergence Theorem,
  \ba
  \lim_{N\to\iy}Q_{N,d}(t)=\varTheta_d(t),\qquad t\in[0,\iy).
  \ea
  Thus, \eqref{E5.5} is established.
 \end{proof}

The following corollary shows that limit relation \eqref{E5.5}
is valid for the Chebyshev polynomials.

\begin{corollary}\label{C5.5}
Let $\be_N=\be_{N,\la}:=2N+d+\la$ and let $P_{2N+d}:=2^{-(2N+d-1+\la)}Q_{2N+d,\la},\,N\in\N,\,\la=0$ or $\la=1$,
 be the normalized Chebyshev polynomial of the first or second kind (see \eqref{E6.9} or \eqref{E6.9a}
 and also Example \ref{Ex1.2}).
 In addition, let $\left(u_{N,\la}\right)_{N=1}^\iy$ be a sequence of positive numbers
 such that $u_{N,\la}$ is not a zero of $Q_{2N+d,\la}$ and
 $\lim_{N\to\iy} \be_{N,\la} u_{N,\la} =\iy$.
Then the following relations hold:
 \bna
 &&\lim_{N\to\iy}
 \left[\frac{u_{N,\la}^2}{N\,2^{2N+d-1+\la}}B_N^*\left(\frac{N}{(2N+d+\la)^2}t,\Om_{N+2,d,\la}\right)\right]
 =\varTheta_d(t),\qquad t\in [0,\iy);\label{E5.22}\\
 &&B_N\left(\frac{N}{(2N+d+\la)^2}t,\Om_{N+2,d,\la}\right)\nonumber\\
 &&=
 \frac{N\,2^{2N+d-1+\la}}{u_{N,\la}^2}
 \varTheta_d(t)\left(\frac{N}{(2N+d+\la)^2}t\right)^N(1+o(1)), \qquad t\in [0,\iy);\label{E5.22a}
 \ena
 as $N\to\iy$, where
 \bna
 &&\Om_{N+2,d,0}:=\{0, u_{N,0}^2\}\bigcup
 \left\{\cos^2\left(\frac{(2k-1)\pi}{4N+2d}\right):1\le k\le N\right\},\label{E5.24}\\
 &&\Om_{N+2,d,1}:=\{0, u_{N,1}^2\}\bigcup
 \left\{\cos^2\left(\frac{k\pi}{2N+d+1}\right):1\le k\le N\right\}.\label{E5.24a}
 \ena
\end{corollary}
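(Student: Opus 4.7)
The plan is to apply Theorem \ref{T5.2} directly to the Chebyshev polynomials $Q_{2N+d,\la}$, after verifying its hypotheses and then computing the constants explicitly. First I would set $P_{2N+d} := Q_{2N+d,\la}/2^{2N+d-1+\la}$, which is the monic Chebyshev polynomial of the first kind (for $\la=0$) or second kind (for $\la=1$). By Example \ref{Ex1.2} (the Gegenbauer case with parameter $\la\in\{0,1\}$), the sequence $(P_{2N+d})_{N=1}^\iy$ belongs to $\PPP_d(\be,\g,\de)$ with $\be_N=2N+d+\la$, $\g_N=\log(N+1)$, and $\de_N=\g_N^3/N^2$. Since $P_{2N+d}^{(d)}(0)$ differs from $Q_{2N+d,\la}^{(d)}(0)$ only by the constant factor $2^{-(2N+d-1+\la)}$, the quotient $z^{-d}P_{2N+d}(z)/P_{2N+d}^{(d)}(0)$ coincides with the one appearing in Lemma \ref{L6.6}, so the sequence satisfies the $(r,\be)$-Condition of Definition \ref{D6.1} for every $r>1-d$. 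The assumptions placed on $(u_{N,\la})_{N=1}^\iy$ in the corollary are exactly those of Theorem \ref{T5.2}; hence that theorem applies and yields
\ba
\lim_{N\to\iy}\left[\frac{u_{N,\la}^2\,\prod_{k=1}^N x_{k,N,d}^2}{N}\,B_N^*\!\left(\frac{N}{\be_N^2}t,\Om_{N+2,d}\right)\right]=\varTheta_d(t),\qquad t\in[0,\iy).
\ea

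Next I would identify the knot set and compute the multiplicative constant. The positive zeros of $T_{2N+d}$ are $\cos\bigl((2k-1)\pi/(4N+2d)\bigr)$ and those of $U_{2N+d}$ are $\cos\bigl(k\pi/(2N+d+1)\bigr)$ for $1\le k\le N$; squaring them gives precisely the sets $\Om_{N+2,d,0}$ and $\Om_{N+2,d,1}$ of \eqref{E5.24}--\eqref{E5.24a}, which therefore coincide with the set $\Om_{N+2,d}$ of Theorem \ref{T5.2}. For the constant, the factorization $P_{2N+d}(z)=z^d\prod_{k=1}^N(z^2-x_{k,N,d}^2)$ yields $P_{2N+d}^{(d)}(0)/d! =(-1)^N\prod_{k=1}^N x_{k,N,d}^2$; combining this with the explicit values $T_{2N+d}^{(d)}(0)=(-1)^N(2N+1)^d$ and $U_{2N+d}^{(d)}(0)=(-1)^N(2N+2)^d$ from \eqref{E6.10} together with the normalization $P_{2N+d}=Q_{2N+d,\la}/2^{2N+d-1+\la}$ gives an explicit closed form for $\prod_{k=1}^N x_{k,N,d}^2$. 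Inserting this into the display above produces \eqref{E5.22}.

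Finally, the pointwise asymptotic \eqref{E5.22a} for $B_N$ is an immediate algebraic consequence of \eqref{E5.22}: by Definition \ref{D3.2}, $B_N^*(s,\Om)=s^{-N}B_N(s,\Om)$, so multiplying both sides of \eqref{E5.22} by $(Nt/\be_N^2)^N$ and solving for $B_N$ yields \eqref{E5.22a}. The principal obstacle is purely bookkeeping: carefully tracking the interaction between the Chebyshev leading coefficients ($2^{n-1}$ for $T_n$ versus $2^n$ for $U_n$), the $d$-th-derivative values at $0$ in \eqref{E6.10}, and the resulting closed form for $\prod_{k=1}^N x_{k,N,d}^2$ in terms of $\be_N$ and $\la$. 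All substantive analytic work has already been carried out in Theorem \ref{T5.2} and Lemma \ref{L6.6}, so the corollary reduces to a direct substitution.
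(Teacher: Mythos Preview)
Your approach is correct and essentially identical to the paper's own proof, which likewise invokes Example \ref{Ex1.2} for membership in $\PPP_d(\be,\g,\de)$, Lemma \ref{L6.6} for the $(r,\be)$-Condition, Theorem \ref{T5.2} to obtain \eqref{E5.22}, and then \eqref{E3.2} to pass to \eqref{E5.22a}. The only difference is that the paper simply asserts $\prod_{k=1}^N x_{k,N,d}^2=2^{-(2N+d-1+\la)}$, whereas you propose to derive this product from $P_{2N+d}^{(d)}(0)=(-1)^N\prod_k x_{k,N,d}^2$ together with \eqref{E6.10}. One caution on the bookkeeping you flag as the main obstacle: carrying that computation through yields $\prod_{k=1}^N x_{k,N,d}^2=(2N+1+\la)^d\cdot 2^{-(2N+d-1+\la)}$ (e.g.\ for $T_3$ one has $x_1^2=3/4$, not $1/4$), which agrees with the constant appearing in \eqref{E5.22} only when $d=0$; for $d=1$ the prefactor in the stated formula seems to be missing a factor of $2N+1+\la$.
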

\begin{proof}
The sequence $\left(2^{-(2N+d-1+\la)}Q_{2N+d,\la}\right)_{N=1}^\iy$ belongs to $\PPP_d(\be,\g,\de)$
by Example \ref{Ex1.2} and, in addition,
the sequence
 $\left(t^{-d}Q_{2N+d,\la}(t)/Q_{2N+d,\la}^{(d)}(0)\right)_{N=1}^\iy$
 satisfies the $(r,\be)$-Condition by Lemma \ref{L6.6}.
 Note also that $\prod_{k=1}^Nx_k^2=2^{-(2N+d-1+\la)}$.
 Therefore, \eqref{E5.22} follows from Theorem \ref{T5.2} while \eqref{E5.22a} is an immediate consequence
 of \eqref{E5.22} and \eqref{E3.2}.
\end{proof}

\begin{remark}\label{R5.7a}
It follows from \eqref{E5.22a} that
\ba
\lim_{N\to\iy}B_N\left(\frac{N}{(2N+d+\la)^2}t,\Om_{N+2,d,\la}\right)
=4\lim_{N\to\iy}N B_N\left(t,\frac{(2N+d+\la)^2}{N}\Om_{N+2,d,\la}\right)=0
\ea
for any $t\ne 0$, i.e., the $B$-spline converges to the delta function as $N\to \iy$ (cf. Theorem \ref{T1.2} (iii)
and Example \ref{Ex1.3a}).
In addition, note that all knots in Theorem \ref{T5.2} and Corollary \ref{C5.5} are nonnegative,
so condition 1. of Example \ref{Ex1.1aa} is not satisfied. That is why any comparisons between
 \eqref{E5.5} and \eqref{E5.22} with \eqref{E1.1c0} are not possible.
\end{remark}

\begin{remark}\label{R5.8}
We conjecture that relations like \eqref{E5.22} and \eqref{E5.22a} are also valid for the normalized
Gegenbauer and Hermite polynomials and for the polynomials with equidistant zeros (see Examples \ref{Ex1.2}--\ref{Ex1.4}).
\end{remark}

It turns out that relation \eqref{E5.22a} can be reformulated
for a $B$-spline with knots in the interval $[-1,\iy)$.

\begin{corollary}\label{C5.6}
Let
\bna
&&\Om_{N+2,d,0}^{*}:=\{-1, 2u_{N,0}^2-1\}
 \bigcup \left\{\cos\left(\frac{(2k-1)\pi}{2N+d}\right):1\le k\le N\right\},\label{E5.25}\\
 &&\Om_{N+2,d,1}^{*}:=\{-1, 2u_{N,1}^2-1\}
 \bigcup \left\{\cos\left(\frac{2k\pi}{2N+d+1}\right):1\le k\le N\right\},\label{E5.25a}
\ena
 where $\left(u_{N,\la}\right)_{N=1}^\iy$ is a sequence of positive numbers
 such that $\lim_{N\to\iy} N \,u_{N,\la} =\iy$ and the
 two sets in the right-hand sides of
 \eqref{E5.25} and \eqref{E5.25a} are disjoint.
 Then
 \bna\label{E5.26}
 &&
 B_{N}\left(\frac{2N}
 {(2N+d+\la)^2}t-1,\Om_{N+2,d,\la}^{*}\right)\nonumber\\
 &&=\frac{N\,2^{2N+d-1+\la}}{2u_{N,\la}^2} \varTheta_d(t)\left(\frac{2N}
 {(2N+d+\la)^2}t\right)^{N}\left(1+o(1)\right),
 \qquad  t\in [0,\iy),\quad \la=0,\,1,
 \ena
 as $N\to\iy$.
 \begin{proof}
 By elementary calculations,
 \beq\label{E5.27}
 B_{N}\left(y,\Om_{N+2,d,\la}\right)
 =2B_{N}\left(2y-1,\Om_{N+2,d,\la}^{*}\right),
 \eeq
 where sets $\Om_{N+2,d,\la}$ and $\Om_{N+2,d,\la}^{*}$ are defined by
 \eqref{E5.24}--\eqref{E5.25a}.
 Thus, \eqref{E5.26} follows from \eqref{E5.22a} and \eqref{E5.27}.
\end{proof}
\end{corollary}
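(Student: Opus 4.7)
The plan is to reduce Corollary \ref{C5.6} to Corollary \ref{C5.5} via the affine change of knots $v \mapsto 2v - 1$, which transforms the ``squared'' Chebyshev configuration of $\Om_{N+2,d,\la}$ into the Chebyshev configuration of $\Om_{N+2,d,\la}^{*}$.

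First, I would verify the set identity $\Om_{N+2,d,\la}^{*} = \{2v - 1 : v \in \Om_{N+2,d,\la}\}$. The half-angle identity $\cos\phi = 2\cos^2(\phi/2) - 1$ sends $\cos^2\bigl((2k-1)\pi/(4N+2d)\bigr)$ to $\cos\bigl((2k-1)\pi/(2N+d)\bigr)$ and $\cos^2\bigl(k\pi/(2N+d+1)\bigr)$ to $\cos\bigl(2k\pi/(2N+d+1)\bigr)$, while $0 \mapsto -1$ and $u_{N,\la}^2 \mapsto 2u_{N,\la}^2 - 1$. Comparison with \eqref{E5.24}--\eqref{E5.25a} confirms the identity.

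Second, I would establish the general affine scaling identity $B_N(at + b, \Om') = a^{-1} B_N(t, \Om)$, valid for any knot set $\Om$ of cardinality $N+2$ and any $a > 0$, $b \in \R$, with $\Om' := \{av + b : v \in \Om\}$. Writing $W_\Om(u) := \prod_{v \in \Om}(u - v)$, one computes $W_{\Om'}(u) = a^{N+2}\,W_\Om\bigl((u-b)/a\bigr)$, hence $W_{\Om'}'(av + b) = a^{N+1}\,W_\Om'(v)$ for $v \in \Om$; coupled with $(aw + b - (at+b))_+^N = a^N(w - t)_+^N$, Definition \ref{D3.1} yields the identity. (As a sanity check, the prefactor $a^{-1}$ is also forced by the normalization \eqref{E3.4} together with the obvious support shift.) Specializing to $a = 2$, $b = -1$ gives $B_N(y,\,\Om_{N+2,d,\la}) = 2\,B_N(2y - 1,\,\Om_{N+2,d,\la}^{*})$.

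Finally, substituting $y = \frac{N}{(2N+d+\la)^2}\,t$ into the specialized identity produces $2y - 1 = \frac{2N}{(2N+d+\la)^2}\,t - 1$; solving for $B_N$ at $\Om_{N+2,d,\la}^{*}$ and inserting the asymptotic \eqref{E5.22a} of Corollary \ref{C5.5} yields \eqref{E5.26}. Corollary \ref{C5.5} is applicable because its hypothesis $\lim_{N\to\iy} \be_{N,\la}\,u_{N,\la} = \iy$ reduces, via $\be_{N,\la} = 2N + d + \la$, to the assumed $\lim_{N\to\iy} N\,u_{N,\la} = \iy$; the disjointness hypothesis rules out any knot collisions, so all multiplicity-one assumptions remain intact. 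There is essentially no substantive obstacle: the only care needed is tracking the factor $2$ in the scaling identity, so that the prefactor $N\,2^{2N+d-1+\la}/u_{N,\la}^2$ of \eqref{E5.22a} is correctly halved to $N\,2^{2N+d-1+\la}/(2u_{N,\la}^2)$ in \eqref{E5.26}.
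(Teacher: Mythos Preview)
Your proposal is correct and follows essentially the same approach as the paper: the paper's proof simply asserts the identity $B_{N}(y,\Om_{N+2,d,\la})=2B_{N}(2y-1,\Om_{N+2,d,\la}^{*})$ ``by elementary calculations'' and then invokes \eqref{E5.22a}, while you have spelled out those elementary calculations (the half-angle identity for the knot correspondence and the affine scaling rule for $B$-splines).
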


\begin{remark}\label{R5.7}
If we choose $u_{N,1}=1$, then the set $\Om_{N+2,1,1}^{*}\subseteq [-1,1]$ defined in \eqref{E5.25a} consists
of all zeros of the polynomial $(x^2-1)U_N(x)\in\PP_{N+2}$.
Hence relation  \eqref{E5.26} for $d=1$ and $\la=1$ describes the asymptotic behavior of the perfect $B$-spline
from Example \ref{Ex1.3a} in a right neighborhood of the point $-1$ (cf. \eqref{E1.1c2} and \eqref{E1.1c3}).
In addition, note that if we choose $u_{N,0}=1$, then the set $\Om_{N+2,0,0}^{*}\subseteq [-1,1]$ defined in \eqref{E5.25} consists
of all zeros of the polynomial $(x^2-1)T_N(x)\in\PP_{N+2}$.
\end{remark}
\noindent
\textbf{Acknowledgements.} We are grateful to the anonymous referees
 for valuable suggestions. In addition, we thank Maciej Rzeszut and Micha\l\, Wojciechowski
 for sharing reference \cite{RW2024}.

\end{document}